\documentclass[11pt,letterpaper]{amsart}
\usepackage{a4wide,amsmath,amsbsy,amsfonts,amssymb,
stmaryrd,amsthm,mathrsfs,graphicx, amscd, tikz-cd, cancel, supertabular}
\usepackage[normalem]{ulem}
\usetikzlibrary{matrix,arrows,decorations.pathmorphing}
\usepackage{subcaption}

\usepackage{tikz}
\usetikzlibrary{matrix,arrows,decorations.pathmorphing}
\usepackage[all,cmtip]{xy}
\usepackage{qtree}

\usepackage[top=1.2in,bottom=1.2in,left=1.21in,right=1.21in]{geometry}
\usepackage[
	hypertexnames=false,
	hyperindex,
	pagebackref,
	pdftex,
	breaklinks=true,
	bookmarks=false,
	colorlinks,
	linkcolor=blue,
	citecolor=red,
	urlcolor=red,
]{hyperref}
\usepackage{hyperref}
\newtheorem{theorem}{Theorem}[section]
\newtheorem{proposition}[theorem]{Proposition}

\newtheorem{lemma}[theorem]{Lemma}

\theoremstyle{definition}

\newtheorem{definition}[theorem]{Definition}
\newtheorem{example}[theorem]{Example}

\newtheorem{problem}[theorem]{Problem}

\numberwithin{equation}{section}

\theoremstyle{definition}

\newtheorem{remark}[theorem]{Remark}
\newtheorem{remarks}[theorem]{Remarks}

\newcommand{\Ec}{\mathcal{E}}
\newcommand{\Fc}{\mathcal{F}}

\newcommand{\Kc}{\mathcal{K}}
\newcommand{\Tc}{\mathcal{T}}

\newcommand{\Pc}{\mathcal{P}}

\newcommand{\Cb}{\mathbb{C}}

\newcommand{\Fb}{\mathbb{F}}

\newcommand{\Rb}{\mathbb{R}}
\newcommand{\Zb}{\mathbb{Z}}

\newcommand{\beq}{\begin{eqnarray}}
\newcommand{\eeq}{\end{eqnarray}}

\newcommand\ssm{\smallsetminus}

\newcommand{\Fcal}{\mathcal{F}}

\newcommand{\Hc}{\mathcal{H}}

\DeclareMathOperator{\aut}{Aut}
\DeclareMathOperator{\En}{E}

\DeclareMathOperator{\GL}{GL}

\DeclareMathOperator{\HS}{HS}

\DeclareMathOperator{\Id}{Id}

\DeclareMathOperator{\Hom}{Hom}

\DeclareMathOperator{\tr}{Trace}

\DeclareMathOperator{\Gras}{Gr}

\DeclareMathOperator{\Riem}{Riem}

\DeclareMathOperator{\SL}{SL}
\DeclareMathOperator{\SO}{SO}

\DeclareMathOperator{\Teich}{Teich}

\DeclareMathOperator{\la}{\langle}
\DeclareMathOperator{\ra}{\rangle}

\DeclareMathOperator{\Diff}{Diff}

\DeclareMathOperator*{\esssup}{ess\,sup}
\DeclareMathOperator{\TE}{TE}
\DeclareMathOperator{\HE}{HE}

\newcommand{\Dil}{\operatorname{Dil}}
\newcommand{\vol}{\operatorname{vol}}

\begin{document}

\title[Extremal mappings of tori and Teichm\"uller potentials]
{Extremal mappings of tori, Teichm\"uller potentials and symmetric-space distance}

\author{Benson Farb}
\address{Department of Mathematics,
University of Chicago,
Chicago, IL 60637, USA}
\email{farb@math.uchicago.edu}

\author{Eduard Looijenga}
\address{Mathematisch Instituut,
Universiteit  Utrecht,
3508 TA Utrecht, 
Nederland}
\email{e.j.n.looijenga@uu.nl}

\date{\today}

\maketitle

\begin{abstract}
The symmetric space $X_n={\rm SL}(n,\Rb)/{\rm SO}(n)$ can be interpreted as the Teichm\"uller space of marked, unit volume, flat $n$-dimensional tori. It comes with a unique (up to scale) ${\rm SL}(n,\Rb)$-invariant metric $d_{X_n}$.  In 1939 Teichm\"uller gave a modular interpretation of $d_{X_2}$ (the hyperbolic metric) in terms of an extremal mapping problem for quasiconformal dilatation.  Such a modular interpretation for $d_{X_n}$ for $n\geq 3$ has remained unaddressed: the natural candidates - minimal quasiconformal dilatation, Lipschitz constant, or total energy - do not work.  

In this paper we give such a modular interpretation, two in fact. We introduce the {\em total expansion} $\TE(f)\in [0,\infty]$ of a Lipschitz map $f:M\to N$ between Riemannian manifolds, a notion related to the notion of ``$k$-dilatation'' developed by Gromov, Guth and others.  For volume-preserving Lipschitz maps $f:\Tc_0\to\Tc_1$ between $n$-dimensional, flat, unit-volume tori, we prove that $\TE(f)$ is minimized in the homotopy class of $f$ precisely by the affine maps in that class and takes on these the value $d_{X_n}$.  We prove similar results for the \emph{Hilbert-Schmidt expansion} $\HE(f)$, which is a simple integral over $M$ and has more of an $L^2$ flavor.
\end{abstract}

\section{Introduction}
The space $X_n$ of inner products on $\Rb^n$ of determinant $1$  is the symmetric space of $\SL(n,\Rb)$:
\[X_n:=\SL(n,\Rb)/\SO(n)\]
and  thus comes equipped with its $\SL(n,\Rb)$-invariant Riemannian metric, normalized in a (standard) manner, which we shall later recall, see Equation \eqref{eq:standard-metric-Xn}.  We  denote the associated path metric on $X_n$ by $d_{X_n}$. For example, $X_2$ is the hyperbolic plane.  The space $X_n$ 
has an interpretation as the {\em Teichm\"uller space} $\Teich(T^n)$ of the $n$-torus $T^n$, defined as the set of equivalence classes of pairs $(\Tc,\Fc)$, 
where $\Tc$ is a flat, unit-volume torus, $\Fc:T^n\xrightarrow{\sim} \Tc$ the class of a  homotopy equivalence (equivalently homology equivalence), and where two such pairs $(\Tc_i,\Fc_i)$ are considered to be equivalent if $\Fc_1\circ \Fc_0^{-1}$ contains an isometry.  Recall that a 
flat $n$-torus $\Tc$ (i.e., a torus endowed  with a flat Riemannian metric) is  isometric to Euclidean $n$-space modulo a lattice. In particular, the  identity component of the isometry group of $\Tc$ 
appears as a group (the ``group of  translations'' of $\Tc$)  acting simply  transitively on $\Tc$.

Given this geometric interpretation of $X_n$, it is a natural problem to find a geometric interpretation 
of the distance $d_{X_n}((\Tc_0,\Fc_0),(\Tc_1,\Fc_1))$ in terms of the solution to an extremal mapping 
problem for the homotopy class $\Fc_1\circ \Fc_0^{-1}:\Tc_0\to \Tc_1$.  When $n=2$, this problem was solved by Teichm\"uller in 1939; see p. 342 of \cite{Teich39}.  He later used the same extremal problem to define a metric on (what we now call) the Teichm\"uller space of Riemann surfaces of genus $g\geq 2$, the study of which is Teichm\"uller theory.  Finding such an interpretation when $n>2$  has, as far as we can tell, remained 
unaddressed.  The natural candidates - minimal quasiconformal dilatation, Lipschitz constant, or total energy - do not give $d_{X_n}$; see Remark 
\ref{remark:dontwork} below.

The main result of this paper solves this problem in two ways, each by minimizing a 
\emph{Teichm\"uller potential}: a real-valued function on the space of Lipschitz maps $f:M\to N$ between closed Riemannian manifolds. One, which we call the {\em total expansion} 
$\TE(f)$,  packages Gromov-Guth's ``$k$-dilatations'' $\Dil_k(f), k\geq 1$ into a single number; 
the other, the \emph{Hilbert-Schmidt expansion} $\HE(f)$,  does something similar  with what we call the \emph{Hilbert-Schmidt energies} $\En_k(f)$. Whereas each $\Dil_k(f)$ involves an essential supremum over $M$ (it is of $L^\infty$-type), the invariant $\En_k(f)$ is an integral over $M$ (it is of $L^2$-type). 

We give the definitions  below in this introduction, but  to get a feel for the  Hilbert-Schmidt expansion: we shall see that it has a pretty interpretation in terms of classical differential geometry: if $g_0$ and $g_1$ are Riemannian metrics on a closed manifold $M$, then the Hilbert-Schmidt expansion of the identity map $f: (M, g_0)\to (M, g_1)$ is the geodesic distance associated with a 
well-known Riemannian metric  on the (infinite-dimensional) space of \emph{all} 
Riemannian metrics on $M$ inducing the standard volume form; see Proposition \ref{prop:WP}.

\subsection{Main theorem} Our main result is the following.

\begin{theorem}[{\bf Main Theorem}]
\label{theorem:newmain}
Let $\Tc_0$ and $\Tc_1$ be flat tori  of the same dimension $n\ge 1$. 
 Assume  that $d:=\vol(\Tc_0)/\vol(\Tc_1)$ is a positive integer and let $\Fc$ be a homotopy class of maps $\Tc_0\to \Tc_1$ of absolute degree $d$.  
 \medskip
 
\begin{enumerate}
\item The functions  $\Dil_k$, $\TE$, $\En_k$ and $\HE$   have a minimum  on the set  of  \emph{volume-preserving} Lipschitz 
maps  $f$ in $\Fc$  (\footnote{By ``volume-preserving'' we mean $|Jac_f(x)|=1$ 
almost everywhere.  Note that Lipschitz maps $f$ are differentiable almost everywhere by Rademacher's Theorem.}) and take that  minimum on the space of affine maps  in $\Fc$. For $\TE$ and $\HE$ this minimum is taken on affine maps \emph{only} (\footnote{It can be shown that this is also true for  $\En_1, \dots, \En_{n-1}$.}).
\bigskip

\item[(2)] Both Teichm\"uller potentials $\TE$ and $\HE$  induce the symmetric space 
distance on $X_n$:  if  $(\Tc_0,\Fc_0)$ and $(\Tc_1,\Fc_1)$ are marked flat unit volume $n$-tori
then 
\begin{equation}\label{eqn:TEmin}
\textstyle d_{X_n}((\Tc_0,\Fc_0),(\Tc_1,\Fc_1))=\inf_{h\in \Fc_1\circ \Fc_0^{-1}}\TE(h)=\inf_{h\in \Fc_1\circ \Fc_0^{-1}}\HE(h)
\end{equation}
where the infimum is taken over all volume-preserving Lipschitz maps $h:\Tc_0\to \Tc_1$ in the homotopy class $\Fc_1\circ \Fc_0^{-1}$.
\end{enumerate}
\end{theorem}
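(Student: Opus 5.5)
The plan is to reduce everything to a pointwise linear-algebra statement about the derivative $df_x$ combined with an averaging (cohomological) argument on the torus. Lift $f$ to the universal covers: after choosing flat coordinates, $\Tc_0=\Rb^n/\Lambda_0$ and $\Tc_1=\Rb^n/\Lambda_1$, and every map in $\Fc$ lifts to $\tilde f(x)=Ax+b+\phi(x)$. Here $A$ is the unique linear map with $A\Lambda_0\subset\Lambda_1$ inducing $\Fc$, and $\phi$ is $\Lambda_0$-periodic and Lipschitz. The affine maps in $\Fc$ are exactly those with $\phi$ constant. Note that $|\det A|=d\cdot\vol(\Tc_1)/\vol(\Tc_0)=1$, so the affine maps are volume-preserving.

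The key identity I would establish first concerns averages. For every $k$, the average over $\Tc_0$ of $\Lambda^k df_x$ (a measurable bounded section of $\Hom(\Lambda^k\Rb^n,\Lambda^k\Rb^n)$, using the global trivializations of both tangent bundles) equals $\Lambda^k A$. For $k=1$ this follows from $\int\! d\phi=0$ by periodicity. For general $k$, note that the entries of $\Lambda^k df$ are $k\times k$ minors, which are sums of $\Lambda^k A$ and terms that are exact forms in $\phi$, namely $d\phi_{i_1}\wedge\cdots$ wedged with constant forms. Their integrals vanish by Stokes, which is valid for Lipschitz forms by approximating $\phi$ by smooth periodic maps. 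Equivalently, $f^*$ acts on $H^k$ by $A^*$, so pairing $f^*\omega$ with constant $(n-k)$-forms recovers $A^*\omega$.

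Given the averaging identity, I would argue as follows.
\begin{itemize}
\item \emph{$L^\infty$ invariants.} By the triangle inequality for the operator norm, $\|\Lambda^kA\|\le\frac1{\vol}\int\|\Lambda^k df\|\le \Dil_k(f)$.
\item \emph{$L^2$ invariants.} By Jensen/Cauchy--Schwarz for the Hilbert--Schmidt norm, $|\Lambda^kA|^2_{HS}\le \frac1{\vol}\int|\Lambda^k df|_{HS}^2$. Equality holds iff $\Lambda^k df$ is a.e.\ constant, which for $k=1$ already forces $d\phi=0$.
\end{itemize}
Thus $\Dil_k$ and $\En_k$ are minimized at affine maps, where they equal the corresponding invariants of $A$. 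Next, $\TE$ and $\HE$ are (by their definitions in the paper) built from the $\Dil_k$, respectively $\En_k$, through a function that is nondecreasing in each argument, so the minimum is attained at affine maps. For uniqueness, I would extract from the equality case a strict inequality in one of the arguments that actually matters (e.g.\ $k=1$, or the combination including $k$ and $n-k$ that the definition uses); under the volume-preserving constraint, equality must then propagate to force $df\equiv A$ a.e., hence $f$ affine.

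For part (2), I would evaluate $\TE$ and $\HE$ on the linear map $A$. Write $A=U\,\mathrm{diag}(\sigma_1,\dots,\sigma_n)V$ with $\prod\sigma_i=1$, so that $\Dil_k(A)=\sigma_1\cdots\sigma_k$ and $\En_k(A)$ is the $k$-th elementary symmetric function in the $\sigma_i^2$. On the other hand, $d_{X_n}$ between the marked tori equals the distance from the identity to $A^TA$ in $X_n$, namely $\sqrt{\sum(\log\sigma_i)^2}$ in the normalization \eqref{eq:standard-metric-Xn}, since affine maps realize the marking change $\Fc_1\circ\Fc_0^{-1}$. One then checks that the defining formulas of $\TE$ and $\HE$ reproduce exactly this quantity. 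For $\TE$ this should be a direct algebraic inversion from the partial sums $\log\sigma_1+\dots+\log\sigma_k$. For $\HE$ one can alternatively appeal to Proposition~\ref{prop:WP}, since along the geodesic $t\mapsto (A^TA)^t$ the path of constant metrics realizes the $X_n$-geodesic.

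I expect the main obstacle to be the strictness and uniqueness statement for $\TE$. Since $\TE$ is of $L^\infty$ type, a nonaffine map can have $\Dil_k(f)=\|\Lambda^kA\|$ for some $k$ without $df$ being constant, so the triangle-inequality argument gives no rigidity by itself. To get around this, I would first try to exploit the volume-preserving constraint together with the equality case in the triangle inequality. Equality forces $\Lambda^k df_x$ to have its top singular direction aligned with that of $\Lambda^kA$ a.e. Running this over all $k$ simultaneously then pins down the full singular value decomposition of $df_x$ to be that of $A$, and it remains to rule out the degenerate case where $A$ is conformal.
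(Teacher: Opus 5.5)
\textbf{The gap is in the $\TE$ step.} Your treatment of $\Dil_k$ and $\En_k$ is correct and slightly more elementary than the paper's calibration and projection argument: you use the explicit lift $Ax+b+\phi$, Stokes for $\frac{1}{\vol}\int\wedge^kDf=\wedge^kA$, and then the triangle inequality resp.\ Jensen. For $\HE$ you rely on the same Borisov--Neff--Sra--Thiel monotonicity as the paper.

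For $\TE$, however, the function $(\ell_1,\dots,\ell_{n-1})\mapsto\sum_{k=1}^n(\ell_k-\ell_{k-1})^2$ (with $\ell_0=\ell_n=0$, $\ell_k=\log\Dil_k$) is \emph{not} nondecreasing in each argument. Its $\ell_k$-derivative is $2(2\ell_k-\ell_{k-1}-\ell_{k+1})$, which is negative wherever the sequence fails to be concave at $k$. The sequence $(\log\Dil_k(f))_k$ need not be concave, because the essential suprema can be attained on different regions.

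Concretely, for $n=4$ the sum equals $20$ at $(3,2,3)$ but $18.5$ at $(3,\frac{5}{2},3)$. Both profiles come from volume-preserving derivatives: log-singular values $(3,-1,-1,-1)$ and $(1,1,1,-3)$ on two regions, plus $(\frac{5}{4},\frac{5}{4},-\frac{5}{4},-\frac{5}{4})$ on a third region for the second profile. Example~\ref{example:notmeasuerpres} shows the same failure in $\Dil_n$.

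What is true is only a first-order statement at the affine point. Let $\lambda_k=\log\sigma_k$, which is nonincreasing, and $t_k:=\log\Dil_k(f)-\log\Dil_k(\Phi)\ge 0$. Volume preservation is used exactly to get $t_0=t_n=0$. Abel summation then gives
\[
\TE(f)^2-\TE(\Phi)^2=\sum_{k=1}^n(t_k-t_{k-1})^2+2\sum_{k=1}^{n-1}(\lambda_k-\lambda_{k+1})t_k\ \ge\ 0,
\]
which is the paper's argument.

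\textbf{Uniqueness.} The same identity supplies the reduction your uniqueness sketch is missing. Equality kills the quadratic term, so all $t_k=0$, i.e.\ $\|\wedge^kDf_p\|\le\|\wedge^kA\|$ a.e.\ for every $k$. There is no ``strict inequality in one argument'' available in general: if $A$ is orthogonal, all $\lambda_k-\lambda_{k+1}$ vanish.

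From there the paper uses Lemma~\ref{lemma:linalg}. Log-majorization gives weak majorization, and von Neumann's trace inequality plus Lemma~\ref{lemma:ineq1} give $\tr(A^*Df_p)\le\tr(A^*A)$ pointwise, with equality only if $Df_p=A$. Integrating against $\int Df=\vol(\Tc_0)\,A$ then forces $Df_p=A$ a.e.

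Your triangle-inequality route can also be finished. With all $t_k=0$ you get $\|\wedge^kDf_p\|=\|\wedge^kA\|$ a.e.\ for all $k$, so $Df_p$ has the singular values of $A$. You also get $\wedge^kDf_p\,u=\|\wedge^kA\|\,v$ for \emph{every} norming pair $(u,v)$ of $\wedge^kA$. Using these at the first index of each block of equal singular values determines $Df_p$ block by block. In particular the conformal (here orthogonal) case is the easiest case, not an exception, since every $(u,Au)$ is norming for $k=1$.

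\textbf{Part (2) for $\HE$.} Proposition~\ref{prop:WP} is only a second-order expansion at $\tau=0$, so it cannot give the value of $\HE(\Phi)$. Use the direct computation $y_k=\sigma_k^2$ instead, which yields $\sqrt{\sum_k(\log\sigma_k)^2}$.
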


Recall that $f:\Tc_0\to \Tc_1$ \emph{affine} if it is equivariant with respect to a homomorphism on the translation groups determined by $\Fc$.  Note that if a homotopy class $\Fc$ of $f$ 
is specified then there is only one such map up to translation, and that if such a map has absolute degree $d$, then it will be a $d$-sheeted covering.

\begin{remarks}\label{rems:main}
\ 
\begin{enumerate}

\item Note that once Item (1) of Theorem \ref{theorem:newmain} is established, 
Item (2) comes down to the following concrete statement:
Let $g_i$ ($i=0,1$)  be inner products on $\Rb^n$ of determinant $1$, thus defining  points of $X_n$.
Let $\Phi$ stand for the identity map of $\Rb^n/\Zb^n$, but  regarded as a map between flat tori 
$(\Rb^n/\Zb^n, g_0)\to (\Rb^n/\Zb^n, g_1)$. Then 
\[\TE(\Phi)=\HE(\Phi)=d_{X_n}(g_0, g_1).\] This is  in fact what we will prove. 

%\item Combining Parts (1) and (2) of Theorem \ref{theorem:newmain} to marked flat unit volume $n$-tori gives
%\begin{equation}
%\label{eq:easymetric}
%d_{X_n}((\Tc_0,\Fc_0),(\Tc_1,\Fc_1))=\TE(\Phi)
%\end{equation}
%where $\Phi:\Tc_0\to\Tc_1$ is any affine diffeomorphism  in the homotopy class $\Fc_1\circ \Fc_0^{-1}$.
\item Example \ref{example:notmeasuerpres} below shows that the requirement in Part~(1) of  Theorem \ref{theorem:newmain} that maps be volume-preserving cannot be dropped.

\item We apply Theorem \ref{theorem:newmain} 
below to Kummer orbifolds (see Theorem \ref{theorem:kummermain}).  We hope to extend these results to the entire $57$-dimensional Teichm\"uller space of marked, Ricci-flat K3-orbifold metrics, 
which is a totally geodesic, symmetric submanifold of the  $X_{22}$. See \S\ref{section:HSP} for a discussion.
\end{enumerate}
\end{remarks}

\subsection{Dilatations  and Hilbert-Schmidt energies of a Lipschitz map} The rest of this introduction is devoted to defining the notions  (such as   $\Dil_k, \TE, \dots$) which appear above. We first need some elementary facts about inner product spaces. 

\medskip
\noindent
{\bf Linear algebra.} Let $E$ and $F$ be finite-dimensional inner product spaces and put $r:=\min\{\dim E,\dim F\}$.  Recall that for $A: E\to F$ is a linear map, its {\em operator norm} $\|A\|$  is defined as 
\[
\|A\|:=\sup_{v\in E\ssm \{0\}}\frac{\displaystyle |A(v)|_F}{\displaystyle |v|_E}.
\]

The  inner product on $E$ determines an inner product on $\wedge^k E$ for each $k\geq 1$ 
by declaring the basis $\{e_{i_1}\wedge \cdots \wedge e_{i_k} \}_{1\le i_1<\cdots <i_k\le \dim E}$ for $\wedge^kE$ to be orthonormal for $\{e_i\}$ an orthonormal basis for $E$. Likewise for $\wedge^k F$.  This gives an operator norm
$\|\; \|$ on $\Hom(\wedge^k E, \wedge^k F)$, which we make concrete in the case of interest as follows: for each  $A\in \Hom(E,F)$  there exists an orthonormal basis $(e_1, e_2, \dots )$ of $E$  such that $A(e_1),A(e_2), \dots$ are pairwise orthogonal (possibly zero) and have $F$-norm $\sigma_1\ge \sigma_2\ge  \cdots\ge 0$. This brings $A$ in `orthogonal diagonal form'. The sequence $\sigma_1\ge  \cdots\ge \sigma_r\ge 0$ only depends on $A$ and is called the \emph{singular sequence} of $A$.  So $\sigma_k=0$ for $k>r$ and $A$ has maximal rank $r$ if and only if $\sigma_r>0$. Then 
\[
\|\wedge^kA\|=\sigma_1\sigma_2\cdots\sigma_k.
\]

The inner product $\la A, B\ra:=\tr (B^*A)$ on $\Hom(E,F)$ 
yields another norm, the {\em Hilbert-Schmidt} norm 
\[\| A\|_{\HS}:=\sqrt{\tr (A^*A)}.\]  
Similarly this gives a norm $\|\wedge^kA\|_{\HS}$. Since  $A^*A(e_k)=\sigma_k^2 e_k$, we see that $\| A\|_{HS}=\sum_k\sigma^2_i$ and more generally, that 
\[
\textstyle \Pc_A(t):=\det (\Id_E+tA^*A)= \prod_k (1+t \sigma_k^2)=\sum_k t^k \|\wedge^kA\|^2_{\HS}.
\]

\medskip
\noindent
{\boldmath${\Dil_k}$} {\bf and} {\boldmath$\En_k$}.
We now bring the above into the context of  differential geometry. For each $k\geq 1$, the {\em $k$-dilatation} of a Lipschitz map $f:M\to N$ between Riemannian manifolds  with $M$ compact is 
\[
\Dil_k(f):=\esssup_{p\in M}\|\wedge^k Df_p\|.
 \]
Here $Df_x$ is defined almost everywhere by Rademacher's theorem. Thus
$\Dil_1(f)$ is the usual Lipschitz constant of $f$ and $\Dil_k(f)$ is the maximal
infinitesimal expansion of $k$-dimensional volume.  For convenience, we set 
$\Dil_0(f)=1$ for all $f$. It is clear  that $\Dil_k(f)=0$ when $k>r:=\min\{\dim M,\dim N\}$. We also note that 
when $n=\dim M=\dim N$, then $\Dil_n(f)=1$ when $f$ is volume-preserving.    

Such $k$-dilatations go back at least to Gromov  (\S 4.4 of \cite{Gr82}). We here use the terminology and
formulation of $k$-dilation developed systematically by Guth in \cite[\S 2]{Gu13}. We define an $L^2$-version, 
which we shall call the  \emph{$k$th Hilbert-Schmidt energy} of $f$ by  
\[
\textstyle \En_k(f):= \int_{M} \|\wedge^k Df_p\|_{\HS}^2\, d\!\vol_M.
\]
So $\En_0(f)=\vol(M)$ and $\En_k(f)=0$ for $k>r$. 

In Theorem \ref{thm:extremal} we show that for flat tori $M=\Tc_0$ and $N=\Tc_1$ (not necessarily of the same dimension),  each  $\Dil_k(f)$ and $\En_k(f)$  is minimized (but not always uniquely - see Example \ref{example:notmeasuerpres}) among all Lipschitz maps $f:\Tc_0\to \Tc_1$ in a fixed homotopy class by the affine maps in that homotopy class. 

\medskip
\noindent
{\bf Teichm\"uller potentials.}
Both $\Dil_k$ and $E_k$ have associated Teichm\"uller potentials, which we now define.

\begin{definition}[{\bf Total expansion}]\label{def:stretch}
Let $f:M\to N$ be a Lipschitz map between Riemannian manifolds with $M$ compact and put  $r:=\min\{\dim M,\dim N\}$. 

The {\em total expansion} of 
$f$ is
\[
\textstyle \TE(f):=
\begin{cases}
\sqrt{\sum_{k=1}^r\big|\log\Dil_k(f)-\log\Dil_{k-1}(f)\big|^2} &\text{if  $Df$ has somewhere rank $r$ and}\\
+\infty & \text{otherwise.}
\end{cases}
\]
(Note that in the first case, $\Dil_k(f)>0$ for $k=1, \dots, r$, so that then $\log \Dil_k(f)$ is defined.)
\end{definition}

Theorem \ref{theorem:newmain} shows that the total expansion  gives an extremal mapping interpretation of the metric $d_{X_n}$ on the symmetric space $X_n$.  As explained in 
Remark \ref{remark:2dcase}, the case $n=2$ of Theorem \ref{theorem:newmain} is equivalent 
to Teichm\"{u}ller's result  (at least if we confine ourselves to Lipschitz maps) that recovers  the hyperbolic metric $d_{X_2}$ in terms of 
minimal quasiconformal dilatation.  
\smallskip

It is clear from the definition that the Hilbert-Schmidt energies appear as coefficients of what we might call the 
\emph{Hilbert-Schmidt energy polynomial}:

\[
\textstyle \Pc_f(t):=\int_M \Pc_{Df} d\!\vol_M =\sum_k \int_M\|\wedge^k Df\|^2_{\HS}\, d\!\vol_Mt^k = 
\sum_k  \En_k(f) t^k\in \Rb^+[t].
\]

\medskip
The function $\Hc_n$ in the last clause of Theorem \ref{theorem:newmain} is defined as follows. 
For  a polynomial of the form 
$P(t)=1+e_1t+\cdots +e_{n}t^{n}$  with each $e_k\ge 0$ and $e_n>0$ we define 
\begin{equation}\label{eqn:Hn}
\textstyle \Hc_n(P)=\Hc_n (e_1, \dots, e_{n}):=\frac{1}{2}\sqrt{\sum_{k=1}^n (\log y_k)^2},
\end{equation}
where the $y_k$ are defined by the decomposition $P(t)=(1+y_1t)\cdots (1+y_nt)$ (so that $e_k$ becomes  the $k$th elementary symmetric function of $y_1, \dots, y_n$). Since the $e_k\ge 0$ and $e_n>0$, the $y_k$ are positive or come in complex conjugate pairs.
In either case, they lie in the domain $\Cb\ssm (-\infty, 0]$ of the principal branch of the logarithm. This  implies  that 
$\sum_{k=1}^n (\log y_k)^2$ is defined and also that it is real and positive. That $\Hc_n$ is strictly monotone in each variable $e_i$ (fixing the others) is a remarkable property due to Borisov-Neff-Thiel \cite[Lemma 4.1]{BNST17}  and which has a Mathoverflow origin. As the reader will see,  this property  seems almost tailor-made for what we will need. 

\begin{definition}[{\bf Hilbert-Schmidt expansion}]\label{def:Hstretch}
For $f:M\to N$  as in Definition \ref{def:stretch} its  {\em Hilbert-Schmidt  expansion} is 
\[
\textstyle \HE(f):=
\begin{cases}
\Hc_r(\En_1(f), \dots, \En_r(f)) &\text{if  $Df$ has somewhere rank $r$;}\\
+\infty & \text{otherwise.}
\end{cases}
\]
\end{definition}

\begin{remark}\label{rem:qc}
When the inner product spaces $E$ and $F$ have the same dimension $n$ and $A$ is a linear isomorphism, then  the rescaled map $\|\wedge^n A\|^{-1/n} A$ is volume  preserving.  When we apply the  above notions  to it, 
$\|\wedge^kA\|$ gets replaced by 
\[
\|\wedge^kA\|^{-k/n}.\|\wedge^kA\|=
(\sigma_1\sigma_2\cdots\sigma_k)^{1-k/n}(\sigma_{k+1}\cdots \sigma_n)^{-k/n}
\]
and likewise 
for  $\|\wedge^kA\|^2_{\HS}$. These expressions are homogeneous of degree zero and  depend only on the  conformal structures on $E$ and $F$ underlying their inner products.
We hit on a classical case when $n=2$ and $E$ and $F$ come only  endowed with a conformal structure: then the square of $\sigma_1^{1/2}\sigma_2^{-1/2}$ (so  $\sigma_1/\sigma_2$) is  usually denoted $K(A)$.

Suppose $M$ and $N$  are Riemann manifolds of the same dimension $n$ with $M$ compact and of unit volume.
If $f$ is a Lipschitz  map that has rank $n$ almost everywhere, then we can use the rescaled version of $Df$ to define 
corresponding expressions  $\Dil'_k(f)$ and $\En'_k(f)$. These are then conformal invariants and  hence so are  the  associated expansions $\TE'(f)$  and $\HE'(f)$. 

For example, for a  homotopy class  $\Fc$ of  maps $M\to N$ between closed connected hyperbolic surfaces (=having constant negative curvature) of unit volume   we get
\begin{equation}\label{eqn:comparison}
\textstyle \TE'(\Fc)=\inf_{f}\sqrt{2}\esssup_f \log K(Df)\ \ \text{and} \ \ \HE'(\Fc):=\frac{1}{2}\inf_{f}\int_M \log K(Df) d\!\vol_M, 
\end{equation}
where $f$ runs over the Lipschitz maps in $\Fc$. If $\Fc$ is a homotopy equivalence, then one can show that the restriction to Lipschitz homeomorphisms does not affect  the infima in \eqref{eqn:comparison},  and we then from $\TE'$ we get $\sqrt{2}$ times  the Teichm\"uller distance, and from $\HE'$ we can obtain the Riemannian form of the Weil-Petersson distance; see 
the end of Section \ref{sect:proofs} and Example \ref{example:hyperbolic}.
\end{remark}

\subsection{Computational resource disclosure and acknowledgements} The idea of using $k$-dilatations and Hilbert-Schmidt energies to arrive at Theorem \ref{thm:extremal}  came up  in a  conversation with  ChatGPT Pro. The same applies to Example \ref{example:notmeasuerpres}. Claude Pro was involved in checking the proofs.  It is a pleasure to thank Curt McMullen for asking a question that led us to define the Hilbert-Schmidt expansion $\HE(f)$, and for helpful comments and corrections on an earlier version of this paper. Farb is supported in part by NSF Grants DMS-181772 and DMS-2506566.

\section{Proof of Theorem \ref{theorem:newmain}}\label{sect:proofs}

In this section we prove Theorem \ref{theorem:newmain}.  Along the way we show that $\Dil_k(f)$ and $\En_k(f)$ are minimized on the affine maps in the homotopy class of $f$.

\subsection{Setup}  As mentioned above, the $n$-dimensional flat torus $\Tc$ is a principal homogeneous space for  the identity component of its  isometry group, referred to as its {\em group of  translations}. This group can be canonically identified with $H_1(\Tc; \Rb/\Zb)$ and so this  identifies each 
tangent space $T_p\Tc$ with $H_1(\Tc; \Rb)$. The inner product on $H_1(\Tc; \Rb)$ thus obtained  is independent of $p$.

This is functorial in the following sense:  given two flat  tori $\Tc_0$ and $\Tc_1$, a homomorphism 
\[\varphi: H_1(\Tc_0)\to H_1(\Tc_1)\] determines  a homotopy class $\Fc_\varphi$ of maps 
$\Tc_0\to\Tc_1$ inducing $\varphi$.  The class $\Fc_\varphi$ then contains an affine map $\Phi: \Tc_0
\to \Tc_1$, i.e.,   equivariant with respect to the homomorphism $H_1(\Tc_0; \Rb/\Zb)\to H_1(\Tc_1; \Rb/\Zb)$  between their translation groups induced by $\varphi$. Such a $\Phi$ is unique up to translation. Via the above identification, the derivative at any $p\in \Tc_0$ is simply 
$\varphi_\Rb: H_1(\Tc_0; \Rb)\to H_1(\Tc_1; \Rb)$. This is a map between inner product spaces and, as recalled in the introduction, such a map has singular sequence $\sigma_1\geq \sigma_2\geq\cdots\geq \sigma_n\geq 0$.
It is then clear that 
\[
\Dil_k(\Phi)=\|\wedge^k \varphi\|=\sigma_1\cdots\sigma_k.
\]
Let $r:=\min\{\dim \Tc_0, \dim\Tc_1\}$. So if $\Dil_r(\Phi)\not=0$,  then we are in the first case of Definition \ref{def:stretch} and
\[
\textstyle \TE(\Phi)=\sqrt{\sum_{k=1}^r (\log \sigma_k)^2}.
\]
In the special case where $\Tc_0$ and $\Tc_1$ have the same dimension $n$ and $\varphi$ is injective, $\Phi$ will be volume-preserving (and so $\Dil_n(\Phi)=1$)  if and only the  absolute degree of $\Phi$ (which is the order of the cokernel of $\varphi$) is equal to the volume ratio  $\vol(\Tc_0)/\vol(\Tc_1)$.

\subsection{Minimizing \boldmath$\Dil_k$ and $ \En_k$} Our first result is that the affine maps minimize $\Dil_k$ and $ \En_k$ in their homotopy class.  Both the statement and the proof for $\Dil_k$  fit in what might be called the  standard comass/calibration tradition. For example,  %Lawson \cite{Law75} and 
Liu \cite[Thm.\ 2.1]{Liu12} found that a dilatation bound for a map from a closed Riemannian manifold to a flat torus, can impose restrictions on its homotopy class.  The proof for $\En_k$ will be more elementary, as  it exhibits this invariant as an orthogonal projector in an inner product space. Our result is the following.

\begin{theorem}[{\bf Affine maps minimize $\Dil_k$ and $\En_k$ in their homotopy class}]
\label{thm:extremal}
Let $\Tc_0$ and $\Tc_1$ be flat  tori, not necessarily of the same dimension.  Let $\varphi: H_1(\Tc_0)\to H_1(\Tc_1)$ be a linear map and denote 
by $\Fc_\varphi$ the associated homotopy class of maps $\Tc_0\to \Tc_1$.
Then for any Lipschitz map 
$f\in \Fc_\varphi$, any affine $\Phi\in \Fc_\varphi$ and any $k\ge 1$, 
\[\Dil_k(f)\geq \Dil_k(\Phi)=\|\wedge^k\varphi\| \quad \text{and}\quad \En_k(f)\ge  \En_k(\Phi)=\|\wedge^k\varphi\|_{\HE}^2\vol(\Tc_0).
\]
Furthermore, if $\En_1(f)= \En_1(\Phi)$ then $f$ differs from $\Phi$ by a translation (hence is affine).
\end{theorem}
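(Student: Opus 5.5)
The plan is to lift everything to the universal covers and compare the pullback of constant forms. Write $\Tc_0=V_0/\Lambda_0$ and $\Tc_1=V_1/\Lambda_1$ with $V_i=H_1(\Tc_i;\Rb)$. A Lipschitz $f\in\Fc_\varphi$ lifts to $\tilde f:V_0\to V_1$ with $\tilde f(x+\lambda)=\tilde f(x)+\varphi(\lambda)$. Hence $\tilde f=\varphi+u$, where $u:V_0\to V_1$ is $\Lambda_0$-periodic and Lipschitz, i.e.\ $u$ descends to a Lipschitz map $\Tc_0\to V_1$. Thus $Df_p=\varphi+Du_p$ almost everywhere.

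The central claim is the averaging identity
\[
\int_{\Tc_0}\wedge^k Df_p\, d\!\vol=\vol(\Tc_0)\,\wedge^k\varphi
\]
as elements of $\Hom(\wedge^kV_0,\wedge^kV_1)$. To prove it, pair with constant forms. For $\omega\in\wedge^kV_1^*$ and a constant $(n-k)$-form $\eta$ on $\Tc_0$, the form $f^*\omega-\Phi^*\omega$ is exact, because $f$ and $\Phi$ are homotopic and $\omega$ is closed on $\Tc_1$. Therefore $\int(f^*\omega-\Phi^*\omega)\wedge\eta=0$.

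For Lipschitz $f$ this needs justification. I would do it directly: expand $\wedge^k(\varphi+Du)$ and note that each term containing at least one factor $Du$ is a sum of exact forms of the form $d(u_{i}\,du_{j_2}\wedge\cdots)\wedge(\text{constant})$. Such terms integrate to zero on the closed torus. This is valid for Lipschitz $u$ by approximating $u$ with smooth periodic maps (mollification) in $W^{1,p}$; minors are weakly continuous, and only the integral identity is needed. Since the constant forms $\omega\otimes\eta$ detect all matrix entries, the identity follows. This regularity step is where I expect the main care to be needed, though it is standard: null-Lagrangian / weak continuity of minors.

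Given the identity, both inequalities follow by convexity of norms. For $\Dil_k$,
\[
\|\wedge^k\varphi\|\,\vol(\Tc_0)=\Big\|\int\wedge^kDf\Big\|\le\int\|\wedge^kDf\|\le \Dil_k(f)\vol(\Tc_0).
\]
For $\En_k$, use Cauchy--Schwarz in the Hilbert--Schmidt inner product, or equivalently the orthogonal decomposition $\wedge^kDf=\wedge^k\varphi+R$ with $\int R=0$:
\[
\En_k(f)=\int\|\wedge^k\varphi+R\|_{\HS}^2=\vol(\Tc_0)\|\wedge^k\varphi\|_{\HS}^2+\int\|R\|_{\HS}^2\ge\En_k(\Phi).
\]
The cross term vanishes because $\wedge^k\varphi$ is constant and $\int R=0$. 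This is the "orthogonal projector" picture alluded to in the text. The values $\Dil_k(\Phi)=\|\wedge^k\varphi\|$ and $\En_k(\Phi)=\|\wedge^k\varphi\|^2_{\HS}\vol(\Tc_0)$ are immediate, since $D\Phi\equiv\varphi$.

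For rigidity, take $k=1$. There $R=Du$, which is linear in $u$, so no weak-continuity argument is needed: $\int Du=0$ directly by periodicity. Equality $\En_1(f)=\En_1(\Phi)$ forces $\int\|Du\|^2_{\HS}=0$, so $Du=0$ almost everywhere. Since $u$ is Lipschitz, it is then constant, so $\tilde f=\varphi+c$. Hence $f$ is $\Phi$ composed with a translation.
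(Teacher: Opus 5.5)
Your proposal is correct and follows essentially the paper's route: the averaging identity $\int_{\Tc_0}\wedge^k Df\,d\!\vol=\vol(\Tc_0)\,\wedge^k\varphi$, then a duality/convexity bound for $\Dil_k$ (your estimate $\|\int\wedge^kDf\|\le\int\|\wedge^kDf\|$ is the dual form of the paper's calibration by a norm-realizing pair of invariant forms $\alpha,\beta$), the Pythagorean decomposition for $\En_k$, and $Df=\varphi$ a.e.\ for the $k=1$ rigidity. The one real difference is how you justify the averaging identity for Lipschitz maps: the paper cites Federer's homotopy formula for flat currents together with the existence of a Lipschitz homotopy from $f$ to $\Phi$, whereas your lift $\tilde f=\varphi+u$ with $u$ periodic, expansion of $\wedge^k(\varphi+Du)$ into exact forms, and mollification is more elementary and self-contained (mollification in fact gives strong convergence of the minors, so weak continuity is not needed).
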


As mentioned in the introduction, the last assertion holds for every $\En_k, k\geq 1$. We omitted this from the statement, as this would need a separate proof. It is not needed for what follows.

\begin{remark}
The assertion about $\Dil_1$ of Theorem \ref{thm:extremal} for tori of the same dimension and of unit volume  is due to Greenfield--Ji \cite[Proposition~4.1]{GreenfieldJi21}.  Our proof restricts to $k=1$ to give a new proof of their result. 
\end{remark}

For the proof of Theorem \ref{thm:extremal} we need a linear algebra observation.  Let $E$ and $F$ be inner product spaces of dimension $m$ and $n$ respectively. Assume $E$ is oriented.  Then the star operator is defined and gives an 
isometric identification $*: \wedge^kE^\ast\xrightarrow{\cong}\wedge^{m-k}E^\ast$ and so 
$\|\wedge^k A\|$ is then also characterized as the smallest constant $c\ge 0$ for which 
\begin{equation}\label{eqn:}
|\alpha\wedge A^*\beta |_E\le c|\alpha|_E.|\beta|_F
\end{equation}
for all $\alpha\in \wedge^{m-k}E^*$ and $\beta\in \wedge^{k}F^*$.

\begin{proof}[Proof of Theorem \ref{thm:extremal} for $\Dil_k$] 
Let $r:=\min\{\dim\Tc_0,\dim\Tc_1\}$.  To prove the theorem, we can of course assume that $1\leq k\leq r$ and that $\Dil_k(\Phi)\neq 0$, as the other cases are trivial.

Let $m:=\dim \Tc_0$ and choose an orientation for $\Tc_0$ so that the  translation-invariant volume element $d\!\vol_{\Tc_0}$ can 
be regarded as an $m$-form on $\Tc_0$. Let $\alpha$ be a nonzero translation-invariant $(m-k)$-form on $\Tc_0$ and $\beta$ a nonzero translation-invariant $k$-form on $\Tc_1$.    

By the homotopy formula for flat currents under locally Lipschitz maps
\cite[\S\S 4.1.9 and 4.1.14]{Fed69}, the integral
\[
\textstyle   \int_{\Tc_0}\alpha\wedge f^*\beta
\]
depends only on the Lipschitz homotopy class of  $f$. Indeed, this follows
by applying the homotopy formula to the graph maps
$x\mapsto (x,f(x))$ and to the closed form
$\pi_0^*\alpha\wedge\pi_1^*\beta$ on  $\Tc_0\times \Tc_1$.  Since $f$ and $\Phi$ are both Lipschitz and homotopic, they are homotopic via some Lipschitz homotopy; see, e.g., Corollary 5.18 on p.109 
of \cite{LV77}.  It follows that
\begin{equation}\label{eqn:calibration}
\textstyle \int_{\Tc_0} \alpha \wedge f^*\beta=\int_{\Tc_0} \alpha \wedge \Phi^*\beta.
\end{equation}
Since   $\alpha$ and $\beta$ are translation invariant, their pointwise norms are constant  with value $|\alpha|_{\Tc_0}$ resp.\  $|\beta|_{\Tc_1}$.  It is then easy to find  $\alpha$ and $\beta$ such that the translation-invariant $m$-form $\alpha \wedge \Phi^*\beta$ equals $\Dil_k(\Phi) |\alpha|_{\Tc_0} .|\beta|_{\Tc_1}.d\!\vol_{\Tc_0}$. Integrating gives
\[
\textstyle \int_{\Tc_0} \alpha \wedge \Phi^*\beta =\Dil_k(\Phi) |\alpha|_{\Tc_0} .|\beta|_{\Tc_1}. vol(\Tc_0).
\]
On the other hand, it is clear that 
\[
\textstyle |\int_{\Tc_0} \alpha \wedge f^*\beta|\le  \Dil_k(f) |\alpha |_{\Tc_0} . |\beta |_{\Tc_1}.vol(\Tc_0)
\] 
and so $\Dil_k(\Phi)\le \Dil_k(f)$.
\end{proof}

\begin{proof}[Proof of Theorem \ref{thm:extremal} for $\En_k$] 
By identifying each tangent space of $\Tc_i$ with  $H_1(\Tc_i;\Rb)$, the derivative of $f$ becomes an integrable vector-valued  map  
\[Df: \Tc_0\to \Hom (H_1(\Tc_0;\Rb), H_1(\Tc_1;\Rb)).\]  

Likewise for 
\[\wedge^k Df: \Tc_0\to \Hom (\wedge^kH_1(\Tc_0;\Rb), \wedge^kH_1(\Tc_1;\Rb)).\] 

We can interpret the identity \eqref{eqn:calibration} as saying that the $\Hom (H_1(\Tc_0;\Rb), H_1(\Tc_1;\Rb))$-valued integrals
$\int_{\Tc_0} \wedge^k Df\,d\!\vol_{\Tc_0}$ and $\int_{\Tc_0} \wedge^k d\Phi\,d\!\vol_{\Tc_0}$
have  the same  matrix coefficients (such a coefficient being given by the pair $(\alpha, \beta)$) and so these integrals are equal. Since the second integrand is the constant $\wedge^k\varphi$, the following 
identity holds in $\Hom (H_1(\Tc_0;\Rb), H_1(\Tc_1;\Rb))$:
\begin{equation}\label{eqn:average}
\textstyle \int_{\Tc_0} \wedge^k Df\,d\!\vol_{\Tc_0}=\int_{\Tc_0} \wedge^k D\Phi\,d\!\vol_{\Tc_0}=\vol(\Tc_0).\wedge^k\varphi .
\end{equation}

Without loss of generality we assume that $\vol(\Tc_0)=1$. If $H$ is a finite-dimensional inner product space, then the space  of  Lipschitz maps  $u: \Tc_0\to H$ has the inner product  
\[\langle u , v\rangle_{\Tc_0}:=\int_{\Tc_0}\langle u , v\rangle_Hd\!\vol_{\Tc_0}\]  
for which the integration map $u\mapsto \int_{\Tc_0} u\, d\!\vol_{\Tc_0}$ defines  an orthogonal  projection $u\mapsto \bar u$ onto its subspace of constant maps $ \Tc_0\to H$. In particular, $\| u\|^2_{\Tc_0}=\|  \bar u\|^2_{\Tc_0}+\|  u-\bar u\|^2_{\Tc_0}$.

We apply this to $H=\Hom (\wedge^kH_1(\Tc_0; \Rb), \wedge^kH_1(\Tc_1; \Rb))$ with its Hilbert-Schmidt inner product
$\langle  A, B\rangle_{\HS}=\tr (B^*A)$ and  $u=\wedge^k Df: \Tc_0\to H$.   
It follows from the preceding that $\bar u=\wedge^k \varphi$ and hence 
\[
\En_k(f)=\|\wedge^k Df\|^2_{\Tc_0}= \|\wedge^k \varphi\|^2_{\HS}+\|\wedge^k Df-\wedge^k \varphi \|^2_{\Tc_0}=\En_k(\Phi)+\|\wedge^k Df-\wedge^k \varphi \|^2_{\Tc_0}\ge \En_k(\Phi).
\]
It also follows that $\En_k(f)=\En_k(\Phi)$ implies $\wedge^k Df=\wedge^k \varphi $; for $k=1$, this means that $Df$  is constant equal to $\varphi$, which implies  that $f$ and $\Phi$ differ by a translation.
\end{proof}

\subsection{A linear algebra lemma}
We will need a linear algebra lemma. This in its turn, needs the following standard  fact (see for instance 
\cite[Ch.\ 1]{MOA11}); for completeness we give a proof.

\begin{lemma}\label{lemma:ineq1}
Let $a_1\ge \cdots \ge a_n>0$ and $b_1\ge \cdots \ge b_n\ge 0$ be monotone nonincreasing sequences of positive resp.\ nonnegative real numbers with the property that
\[
b_1+\cdots +b_k\le a_1+\cdots +a_k, \quad  \text{for all $k=1, \dots, n$}. 
\]
Then  $\sum_{k=1}^n a_kb_k\le \sum_{k=1}^n a_k^2$  and this inequality is strict unless $a_k=b_k$ for all $k=1, \dots, n$.
\end{lemma}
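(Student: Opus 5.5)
The natural approach is Abel summation, i.e.\ summation by parts. Put $A_k:=a_1+\cdots+a_k$ and $B_k:=b_1+\cdots+b_k$, with $A_0=B_0=0$, and set $a_{n+1}:=0$. Then
\[
\textstyle \sum_{k=1}^n a_k(a_k-b_k)=\sum_{k=1}^n a_k\big((A_k-B_k)-(A_{k-1}-B_{k-1})\big)=\sum_{k=1}^{n}(a_k-a_{k+1})(A_k-B_k).
\]
By hypothesis each $A_k-B_k\ge 0$. Since the $a_k$ are nonincreasing and $a_{n+1}=0$, each coefficient $a_k-a_{k+1}\ge 0$. So the right-hand side is a sum of nonnegative terms, which gives $\sum a_kb_k\le\sum a_k^2$. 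Only the monotonicity of $(a_k)$ and the partial-sum hypothesis are used here; the monotonicity of $(b_k)$ is not needed for the inequality.

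For the equality case, assume $\sum a_kb_k=\sum a_k^2$. Then every term $(a_k-a_{k+1})(A_k-B_k)$ vanishes. The last coefficient is $a_n-a_{n+1}=a_n>0$, so $A_n=B_n$. This alone does not give $a_k=b_k$ for every $k$, because the intermediate coefficients $a_k-a_{k+1}$ may be zero. This is the main obstacle.

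To handle it, group the indices into maximal blocks on which $a_k$ is constant, say the blocks end at indices $k_1<k_2<\cdots<k_s=n$. The coefficients $a_{k_j}-a_{k_j+1}$ at the block ends are all strictly positive, so equality forces $A_{k_j}=B_{k_j}$ for every $j$. Hence on each block $\{k_{j-1}+1,\dots,k_j\}$, where $a_k$ equals a constant $c$, the $b_k$ have sum $(k_j-k_{j-1})c$. Since $A_k-B_k\ge 0$ at every intermediate index, we get for the first index in a block: $b_{k_{j-1}+1}\le c$.

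Now use the monotonicity of $(b_k)$ to close the argument. The $b_k$ in the block are nonincreasing, their average is $c$, and the first one is at most $c$. Together these force all of them to equal $c$. Therefore $a_k=b_k$ for all $k$, which is the equality statement of the lemma.
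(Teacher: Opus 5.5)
Your proof is correct and follows the same route as the paper: Abel summation against the partial sums $A_k, B_k$, with equality forcing $A_k=B_k$ wherever $a_k>a_{k+1}$ and at $k=n$. Your block argument, which combines $b_{k_{j-1}+1}\le c$ with the monotonicity of $(b_k)$, supplies exactly the detail the paper compresses into ``tracking this through.''
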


\begin{proof}
For \(0\leq k\leq n\), set $A_k:=\sum_{i=1}^k a_i$ and $B_k:=\sum_{i=1}^k b_i$. So $B_k\le A_k$ by hypothesis  and 
$A_0=B_0=0$.  Then 
\begin{multline}
\textstyle  \sum_{k=1}^n a_kb_k= \sum_{k=1}^n a_k(B_k-B_{k-1}) =a_nB_n+ \sum_{k=1}^{n-1}(a_k-a_{k+1})B_k\le \\
\textstyle  \le a_nA_n+ \sum_{k=1}^{n-1}(a_k-a_{k+1})A_k=\sum_{k=1}^n  a_k(A_k-A_{k-1})=\sum_{k=1}^n a_k^2
\end{multline}
Equality forces $B_n=A_n$ and $B_k=A_k$ when $a_k>a_{k+1}$. Tracking this through (using that the $(b_k)_k$  is also nonincreasing)
then yields  $b_k=a_k$ for all $k$.
\end{proof}

\begin{lemma}\label{lemma:linalg}
Let $E$ and $F$ be real inner product  spaces of the same dimension $n$ and let $A: E\to F$ and $B: E\to F$ be linear maps with $A$ an isomorphism.  Suppose 
that $\|\wedge^k B\|\le  \|\wedge^kA\|$ for $k=1, \dots, n$. Then 
\[\tr(A^*B) \le \tr(A^*A)\] 
and if equality holds then $B=A$.
\end{lemma}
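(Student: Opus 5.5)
The idea is to bring $A$ to orthogonal diagonal form and then apply Lemma \ref{lemma:ineq1} twice: first to singular values, then to logarithms of singular values.

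Choose an orthonormal basis $(e_i)$ of $E$ such that the vectors $Ae_i$ are orthogonal with norms $a_1\ge\cdots\ge a_n>0$. Put $f_i:=Ae_i/a_i$, an orthonormal basis of $F$. Then
\[\tr(A^*B)=\sum_i\la Be_i,Ae_i\ra=\sum_i a_i\la Be_i,f_i\ra=\sum_i a_i c_i,\]
where the $c_i:=\la Be_i,f_i\ra$ are the diagonal entries of the matrix of $B$ in these bases. Let $b_1\ge\cdots\ge b_n\ge0$ be the singular values of $B$. The hypothesis says
\[b_1\cdots b_k\le a_1\cdots a_k\quad\text{for } k=1,\dots,n.\]

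The argument then has two steps.

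\textbf{Step 1 (diagonal entries versus singular values).} I will use von Neumann's trace inequality: $\sum_i a_ic_i\le\sum_i a_ib_i$ for $a$ nonincreasing and nonnegative. It can be proved by Abel summation, exactly as in Lemma \ref{lemma:ineq1}. Write $\sum_i a_ic_i=a_nC_n+\sum_{k<n}(a_k-a_{k+1})C_k$ with $C_k:=c_1+\cdots+c_k$. Then use the Ky Fan bound $C_k\le b_1+\cdots+b_k$. This bound holds because $C_k$ is the trace of $P_kB|_{E_k}$, where $E_k$ is the span of $e_1,\dots,e_k$ and $P_k$ is the orthogonal projection onto the span of $f_1,\dots,f_k$, and that trace is at most the sum of the top $k$ singular values.

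\textbf{Step 2 (multiplicative to additive majorization).} The hypothesis gives weak log-majorization of $(b_i)$ by $(a_i)$. I need to upgrade this to additive weak majorization $b_1+\cdots+b_k\le a_1+\cdots+a_k$, so that Lemma \ref{lemma:ineq1} yields $\sum a_ib_i\le\sum a_i^2=\tr(A^*A)$. This is the classical fact that weak log-majorization implies weak majorization. Its proof runs through convexity of $\exp$: weak majorization of $\log b$ by $\log a$ (Karamata, or Lemma \ref{lemma:ineq1}-type Abel summation with convex increasing functions) gives $\sum_{i\le k}\phi(\log b_i)\le\sum_{i\le k}\phi(\log a_i)$ for $\phi=\exp$. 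If some $b_i$ vanish, replace them by small $\epsilon>0$, or truncate $k$ at the rank. I expect this step to be the main technical obstacle, mostly in presenting it cleanly.

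\textbf{Equality.} If $\tr(A^*B)=\tr(A^*A)$, then every inequality in the chain is an equality. Equality in Lemma \ref{lemma:ineq1} forces $b_i=a_i$ for all $i$. Equality in Step 1 then gives $C_n=\sum b_i$, i.e. $\tr(A^{-1}{}^*\ldots)$. It is cleaner to argue as follows. Set $U:=$ the matrix of $B$ in the bases $(e_i),(f_i)$, so $\sum_i a_iU_{ii}=\sum_i a_i^2$, while $B$ and $A$ have the same singular values $a_i$. Then
\[\|B-A\|_{\HS}^2=\sum b_i^2+\sum a_i^2-2\tr(A^*B)=2\sum a_i^2-2\sum a_i^2=0.\]
Hence $B=A$. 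This Hilbert–Schmidt identity makes the equality case immediate once Lemma \ref{lemma:ineq1} has given $b_i=a_i$.
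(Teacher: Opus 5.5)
Your proposal is correct and follows the paper's proof: von Neumann's trace inequality, then log-majorization implies weak majorization, then Lemma~\ref{lemma:ineq1}, and finally the identity $\|B-A\|_{\HS}^2=\tr(B^*B)+\tr(A^*A)-2\tr(A^*B)=0$ for the equality case, which is the paper's Cauchy--Schwarz argument. The only difference is that you sketch proofs (via Ky Fan and Karamata-type Abel summation) of the two classical facts the paper simply cites; you should also delete the abandoned fragment about $C_n$ in your equality paragraph, since the argument does not need it.
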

\begin{proof}
Let $(a_1\ge\cdots\ge a_n>0)$ be the singular sequence of $A$ and $(b_1\ge\cdots\ge b_n\ge 0)$ the one of 
$B$. Von Neumann's  trace inequality asserts that  then $\tr(A^*B)\le \sum_{k=1}^n a_k b_k$ (see for example \cite{Mirsky75}  for a short proof). 

The assumption of the lemma states that $b_1\cdots b_k\le a_1\cdots a_k$ for $k=1, \dots, n$.  It follows from this  that 
\begin{equation}\label{eqn:ineq2}
b_1+\cdots +b_k\le a_1+\cdots +a_k, \quad  \text{for all $k=1, \dots, n$}
\end{equation}
(this  is known among statisticians as ``log-majorization implies weak-majorization'', see for example \cite[Ch.\ 5 A.2.b]{MOA11}). 
The inequalities  \eqref{eqn:ineq2} and  Lemma \ref{lemma:ineq1} yield 
\[\label{eqn:ineq3}
\textstyle \tr (A^*B)\le \sum_{k=1}^n a_k b_k\le \sum_{k=1}^n a_k^2 =\tr(A^*A).
\]
and that equality implies  $b_k=a_k$ for $k=1, \dots, n$. So it remains to show that in that last case we must have $B=A$.
This follows from the Cauchy-Schwarz inequality for the  Hilbert-Schmidt inner product on $\Hom(E,F)$, for 
$\tr (A^*B)$, $\tr(A^*A)$ and $\tr(B^*B)$  then all take  the same value $\sum_{k=1}^n a_k^2$.
\end{proof}

\subsection{Proof of Theorem \ref{theorem:newmain}}
\begin{proof}[Proof of Part~(1) of Theorem \ref{theorem:newmain}]
Let $f:  \Tc_0\to \Tc_1$  be  a volume-preserving Lipschitz map of absolute degree $d=\vol(\Tc_0)/\vol(\Tc_1)$ and let $\varphi: H_1(\Tc_0)\to H_1(\Tc_1)$ the map induced on homology.  Let 
$a_1\ge a_2\ge\cdots \ge a_n>0$ be the singular sequence of 
$\varphi: H_1(\Tc_0; \Rb)\to H_1(\Tc_1; \Rb)$, and let $\Phi:\Tc_0\to\Tc_1$ be an affine map homotopic to $f$. 

We put $\lambda_k:=\log a_k$. So $\lambda_k\ge \lambda_{k+1}$ and 
$\log \Dil_k(\Phi)=\lambda_1+\cdots +\lambda_k$. Since  $f$ and $\Phi$ are volume-preserving, 
\[\Dil_n(f)=\Dil_n(\Phi)=a_1a_2\cdots a_n=1\] 
and $\Dil_0(f)=\Dil_0(\Phi)=1$ by definition. 
By Theorem~\ref{thm:extremal}, $\Dil_k(f)\ge \Dil_k(\Phi)$ and so $t_k:=\log  \Dil_k(f)-\log \Dil_k(\Phi) \ge 0$.  Then
\begin{multline*}
\textstyle     \TE(f)^2-\TE(\Phi)^2 =\\
\textstyle = \sum_{k=1}^n (\log D_k(f)-\log D_{k-1}(f) )^2 -  \sum_{k=1}^n (\log D_k(\Phi)-\log D_k(\Phi))^2=\\
\textstyle =\sum_{k=1}^n  \bigl( (\lambda_k+t_k-t_{k-1})^2 -\lambda_k^2  \bigr) =\sum_{k=1}^n (t_k-t_{k-1})^2  +
2\sum_{k=1}^n\lambda_k(t_k-t_{k-1}) 
\end{multline*}
and since Abel summation gives  
\[
\textstyle  2\sum_{k=1}^n\lambda_k(t_k-t_{k-1})=2\sum_{k=1}^{n-1}(\lambda_k -\lambda_{k+1})t_k\ge 0,
\]
 this proves that $\TE(f)\geq \TE(\Phi)$.

Suppose now that $\TE(f)=\TE(\Phi)$. Then the preceding shows that $\sum_{k=1}^n(t_k-t_{k-1})^2=0$ and hence $t_k=t_{k-1}$ for every $k$. Since 
$t_0=0$, it follows that $t_k=0$ for all $k$, so that $\Dil_k(f)=\Dil_k(\Phi)$ for $k=1, \dots ,n$.
We regard the derivatives $Df_p$ (where defined) and $d\Phi_p$ as maps between the tangent spaces of $\Tc_0$ and $\Tc_1$ at their identity element (so that now $d\Phi_p=\varphi$).  For almost every $p\in \Tc_0$ where $f$ is smooth, Lemma \ref{lemma:linalg} with $A=\varphi$ and $B=Df_p$ yields
\begin{equation}\label{eqn:trace-pointwise}
\tr (\varphi^*Df_p)\le \tr (\varphi^*\varphi)
\end{equation}
or  equivalently,  $\tr (\varphi^*(\varphi-Df_p))\ge 0$. The identity   \eqref{eqn:average}  gives that  $\int_{\Tc_0} (Df-\varphi)  d\!\vol_{\Tc_0}=0$
and so the integral of the nonnegative function $\tr (\varphi^*(\varphi-Df_p))$ is zero. This implies that $\tr (\varphi^*(\varphi-Df_p))$ is zero almost everywhere. Invoking Lemma \ref{lemma:linalg} once more, we then find that $Df_p$ is constant equal to 
$\varphi$ almost everywhere. This implies that $f$ differs from $\Phi$ by a translation, hence is affine. 

The remaining assertion,  that  $\HE(f)=\HE(\Phi)$ yields the same conclusion, will be proved later.
\end{proof}

We next describe the normalized Riemannian metric on $X_n$ and its associated path metric. A point of $X_n$ is a  determinant-one positive-definite 
quadratic form on $\Rb^n$.   We use the normalization of the standard invariant metric for which a tangent vector at $g$ is a symmetric bilinear form $u$ satisfying $\operatorname{tr}(g^{-1}u)=0$ and
\begin{equation}
\label{eq:standard-metric-Xn}
\textstyle \langle u,v\rangle_g :=\frac14\operatorname{tr}\big((\tilde g^{-1}u)(\tilde g^{-1}v)\big),
\end{equation}
where $\tilde g\in \GL_n(\Rb)$ is the self-adjoint invertible transformation associated to $g$.
Equivalently, in the quotient model $X_n=\SL(n,\Rb)/\SO(n)$, the tangent space at the base point is the space of symmetric trace-zero matrices and the inner product there is $(S,T)\mapsto\frac14\operatorname{tr}(ST)$.  For $g_0,g_1\in X_n$, choose a $g_0$-orthonormal basis $(e_1,\ldots,e_n)$ 
of $\Rb^n$ which is $g_1$-orthogonal.  So $g_1(e_i,e_j)=e^{2\lambda_i}\delta_{ij}$ for some $\lambda_i\in \Rb$ (equivalently,  $(e^{-\lambda_i}e_i)_{i=1}^n$ is orthonormal for $g_1$).
Since $g_0$ and $g_1$ have determinant one, $\sum_{i=1}^n\lambda_i=0$.  The curve $g_t$, $0\leq t\leq 1$, defined by
\[
g_t(e_i,e_j)=e^{2t\lambda_i}\delta_{ij}
\]
is the symmetric-space geodesic from $g_0$ to $g_1$.
By \eqref{eq:standard-metric-Xn}, its squared speed is $\sum_{i=1}^n\lambda_i^2$. 
Since $X_n$ is a Hadamard manifold, this geodesic is minimizing and hence
\begin{equation}
\label{eq:distance-Xn-singular-values}
\textstyle d_{X_n}(g_0,g_1)=\sqrt{\sum_{i=1}^n\lambda_i^2}.
\end{equation}

We now turn to the proof of Part~(2) of Theorem \ref{theorem:newmain}. As explained in the first item of  Remarks \ref{rems:main}, this comes down to
proving that $\TE$ and $\HE$ take on  the identity map on the torus $\Rb^n/\Zb^n$ endowed with the flat metrics defined
by the determinant 1 inner products $g_0$ resp.\ $g_1$ on $\Rb^n$ (denoted $\Phi$) the value $d(g_0,g_1)$.

\begin{proof}[Proof of Part~(2) of Theorem  \ref{theorem:newmain} for $\TE$]
Let $\varphi$ be the identity map of $\Rb^n$, but regarded as a map   $(\Rb^n,g_0)\longrightarrow (\Rb^n,g_1)$ between inner product spaces. Let  $e^{\lambda_1}\ge \cdots \ge e^{\lambda_n}$ be the singular sequence of $\varphi$.  So then
\[
\Dil_k(\Phi)=e^{\lambda_1+\cdots+\lambda_k}
\qquad (1\leq k\leq n).
\]
Note that $\Dil_n(\Phi)=1$ ($\Phi$ is volume-preserving).  It follows from Definition \ref{def:stretch} that
\[
\textstyle \TE(\Phi)^2
=\sum_{k=1}^n
\left|\log\Dil_k(\Phi)-\log\Dil_{k-1}(\Phi)\right|^2 =\sum_{k=1}^n\lambda_k^2.
\]
Together with \eqref{eq:distance-Xn-singular-values}, this gives
$\TE(\Phi)=d_{X_n}((\Tc_0, f_0),(\Tc_1,f_1))$.
By Part~(1), every volume-preserving Lipschitz map $h$ in $\Fc_1\circ \Fc_0^{-1}$ satisfies $\TE(h)\geq\TE(\Phi)$ and so
\[
\inf_{h\in  \Fc_1\circ \Fc_0^{-1}}\TE(h)
=\TE(\Phi)
=d_{X_n}((\Tc_0,\Fc_0),(\Tc_1,\Fc_1)),
\]
as claimed.
\end{proof}

\begin{proof}[Proof of Theorem \ref{theorem:newmain} for $\HE$]
Let $\varphi$ and $e^{\lambda_1}\ge \cdots\ge e^{\lambda_n}$ be as above.  
Let $f: \Tc_0\to \Tc_1$ resp.\ $\Phi: \Tc_0\to \Tc_1$ be a volume-preserving Lipschitz map  resp.\ an affine map, both  inducing $\varphi$. There are three things for us  to 
show: 

\begin{enumerate}
\item $\Hc_n(E_1(f), \dots , E_n(f))\ge \Hc_n(E_1(\Phi), \dots , E_n(\Phi))$.
\item Equality in (i) implies $f$ affine (so $f$ is then equal to $\Phi$ up to translation).
\item $\Hc_n(E_1(\Phi), \dots , E_n(\Phi))=\sqrt (\sum_{k=1}^n 
\lambda_k^2)$.
\end{enumerate}

Let us begin with noting that $\En_n(\Phi)=1$ (because $\varphi$ is an isomorphism) and that  $\En_n(f)=1$ (because $f$ is volume preserving).  By Theorem \ref{thm:extremal}, $\En_k(f)\ge \En_k(\Phi)$ for $k=1,\dots, n-1$ with 
$\En_1(f)= \En_1(\Phi)$ implying  that $f$ equals $\Phi$ up to a translation.

Recall that 
\[
\textstyle \sum_k \En_k(\Phi)t^k=\det (\Id_E+t\varphi^*\varphi)= \prod_k (1+t e^{2\lambda_k}).
\]
If  $y_1, \dots, y_n$ are defined by the property  that 
\[
\textstyle \sum_k \En_k(f)t^k=(1+ty_1)\cdots (1+ty_n),
\]
then by definition (Equation \eqref{eqn:Hn}), 
\[
\textstyle \Hc_n(E_1(f), \dots, E_n(f))=\frac{1}{2}\sqrt{\sum_k (\log y_k)^2}.
\] 
This ensures that we get for $f=\Phi$
the value  $\sqrt (\sum_k \lambda^2_k)$, which thereby settles (3). 

This brings us in a situation  where all the hypotheses are fulfilled of Lemma 4.1  of 
Borisov-Neff-Thiel  \cite{BNST17}. That lemma  then tells us that $\sum_k \lambda_k^2\le \sum_k (\frac{1}{2}\log y_k)^2$, proving (1) and that equality implies $\En_k(\Phi)=\En_k(f)$ for all $k$. By taking  $k=1$, we then conclude that $f$ differs from $\Phi$ by a translation, proving (2).
\end{proof}

\begin{example}[{\bf Dropping the volume preserving assumption destroys affine minimality}]
\label{example:notmeasuerpres}The following example was suggested to us by ChatGPT.
For $\lambda\ge 0$ we turn the  standard $2$-torus $\Rb^2/\Zb^2$ into a flat one  using the metric defined by
quadratic form $(e^\lambda x)^2 + (e^{-\lambda}y)^2$ on $\Rb^2$. The map $\Phi: \Tc_0\to \Tc_\lambda$ which induces the identity 
in $\Rb^2/\Zb^2$ has  the singular sequence $(e^\lambda , e^{-\lambda})$ and so 
\[
\Dil_1(\Phi) = e^{\lambda}\quad and \quad   \Dil_2(\Phi) = 1.
\]
This implies that $\TE(\Phi) = \sqrt{2}\lambda = d_{X_2}(\Tc_0,\Tc_\lambda)$.
Now define $f: \Tc_0\to \Tc_\lambda$ by 
\[
f(x,y) = (x, y + (e^\lambda - 1)(2\pi)^{-1}\sin(2\pi y)) \pmod{\Zb^2}.
\]
Then $Df$ has the diagonal form $(\begin{smallmatrix} 1 & 0\\ 0 & c(y)\\ \end{smallmatrix})$, where $c(y) = 1 + (e^\lambda - 1)\cos(2\pi y)$.
Let us take  $e^\lambda = 3/2$. Then  $0 < c(y) \le \lambda$ and so $f$ is a
smooth degree-one diffeomorphism in the same homotopy class as $\Phi$. Its local
singular values are $e^\lambda$ and $e^{-\lambda}c(y)$. Since $c(y) \le  e^\lambda < e^{2\lambda}$,
we have $\Dil_1(f) = e^\lambda$ and $\Dil_2(f) = \sup c(y) =e^\lambda=3/2$ and hence 
\[
\TE(f) = \sqrt{(\lambda^2 + (\lambda - \lambda)^2} =\lambda < \sqrt{2}\lambda = \TE(\Phi)
\]
and so the affine map $\Phi$ does not minimize $\TE$ in its homotopy class. 
\end{example}

\begin{remark}[{\bf Functionals that don't work}] \label{remark:dontwork}
Let $\Fc=\Fc_1\circ\Fc_0^{-1}$, let $\Phi\in\Fc$ be affine, and write its
singular sequence as $e^{\lambda_1}\geq\cdots\geq e^{\lambda_n}$, where
$\sum_i\lambda_i=0$.  Then
\[
\textstyle d_{X_n}((\Tc_0,\Fc_0),(\Tc_1,\Fc_1))=\sqrt{\sum_i\lambda_i^2}.
\]
Three other candidate extremal problems that could be used to reconstruct $d_{X_n}$ are the best Lipschitz constant $\Dil_1$, the quasiconformal dilatation $K$, and the total energy $\En_1(f):=\int_{\Tc_0}\|Df\|_{\rm HS}^2\,d\!\vol_{\Tc_0}$.  The infima of these functionals on a fixed homotopy class are realized on the affine maps, with values: 
\[
\textstyle \inf_f \Dil_1(f)=e^{\lambda_1},\qquad
\inf_f K(f)=e^{\lambda_1-\lambda_n},\qquad
\inf_f \En_1(f)=\sum_i e^{2\lambda_i}
\]
where the infima should be taken over the $f\in \Fcal$ that are  Lipschitz, quasiconformal homeomorphisms and Lipschitz respectively.  However,  as the following examples show, no  function of any of these functionals can give $d_{X_3}$. 
By appending zeroes, we obtain such examples for $d_{X_n}$ for any  $n\ge 3$. 
\begin{enumerate}
\item The logarithmic
singular sequences $(t,0,-t)$ and $(t,-t/2,-t/2)$ have the same minimal Lipschitz constant $e^t$, but symmetric-space distances $\sqrt2\,t$ and $\sqrt{3/2}\,t$.  
\item The sequences $(t,0,-t)$ and $(4t/3,-2t/3,-2t/3)$
have the same minimal quasiconformal dilatation $e^{2t}$, but $X_n$-distances
$\sqrt2\,t$ and $\sqrt{8/3}\,t$.  

\item The minimum energy alone does not determine the symmetric-space
distance. This already occurs in dimension \(3\). Let $z$ be the largest solution of the equation $x+x^{-1}=\frac{13}{4}$,
that is $z:=\frac{1}{8}(13+\sqrt{105}$, and let $t:=\frac{1}{2}\log z$. Consider the 
logarithmic singular sequences 
\[
\textstyle  A=(t,0,-t), \quad \text{and } B=(s,s,-2s)\quad \text{with } s=\frac12\log 2.
\]  
Both sequences have sum zero. 
For a singular sequence $(e^{\lambda_1}\ge e^{\lambda_2}\ge e^{\lambda_3})$ the minimum energy is
$\En_{\min}(\lambda)=\sum_{i=1}^3 e^{2\lambda_i}$. Consequently,
\[
\textstyle   \En_{\min}(A) =e^{2t}+1+e^{-2t} =z+1+\frac1z =\frac{17}{4},\quad  \En_{\min}(B =2e^{2s}+e^{-4s} =4+\frac14 =\frac{17}{4}.
\]
Thus the two homotopy classes have the same minimum energy. Their
squared symmetric-space distances, however, are
\[
\textstyle    d_A^2 =\lVert A\rVert_2^2 =2t^2 =\frac12(\log z)^2,\quad d_B^2 =\lVert B\rVert_2^2 =6s^2 =\frac32(\log 2)^2
\]
and these are clearly different. Therefore two homotopy classes can have the same minimum energy but
different symmetric-space distances. 
\end{enumerate}
\end{remark}

\begin{remark}[{\bf The $2$-dimensional case and the classical Teichm\"uller distance}]
\label{remark:2dcase}
Suppose $M$ and $N$ are Riemann surfaces with $M$ closed. 
Assume $f:M\to N$ is a Lipschitz map which is of rank $2$ almost everywhere.
If  $f$ is differentiable of rank 2 at $p\in M$, then  choose hermitian inner products  in $T_pM$ and $T_{f(p)}N$ so that  $D_pf$ has a singular sequence, say  $\sigma_1>\sigma_2>0$. The quotient $K(D_pf):=\sigma_1/\sigma_2$ is independent of these  choices and is what is usually called the {\em dilatation} of $f$ at $p$. The dilatation of $f$ is the  essential supremum of this function and denoted $K(f)$. Remark \ref{rem:qc}  shows that this is also the square of what we denoted $\Dil'_1(f)$.
Teichm\"uller assigned to the homotopy class $\Fc$ defined by $f$ the `distance'  $\frac{1}{2}\log\inf_f K(f)=\log \inf_f\Dil'_1(f)$ (where $f$ runs over this  homotopy class). Note that this also 
$\frac{1}{\sqrt{2}}\TE'(\Fc)$.

This  is discussed from the Lipschitz/Thurston-metric point of view for the case when $M$ and $N$ are closed Riemann surfaces of genus one by Belkhirat--Papadopoulos--Troyanov \cite{BPT05}, and for higher-dimensional
flat tori by Greenfield--Ji \cite{GreenfieldJi21}, where they give a modular interpretation of 
a ``projective Hilbert metric'' on $X_n$. 
\end{remark}

\section{The Hilbert-Schmidt expansion as a Riemannian metric}
\label{section:HSP}

For a manifold  $M$,  the space of Riemannian metrics on $M$ make up an open convex cone $\Riem(M)$ in the space of all  quadratic forms on its tangent bundle. So for a Riemannian metric $g$ on $M$, the tangent space $T_g\Riem(M)$ can be identified 
with the linear space of all quadratic forms on $TM$, or what amounts to the same thing, the space of $g$-selfadjoint endomorphisms of $TM$. The latter  space comes with a (standard) inner product defined by 
\[
\textstyle \la S, T\ra_g =\int_M \tr(ST)d\!\vol_g.
\]
Assume  $M$ is closed. For a real number $\mu>0$ denote by  $\Riem(M)_\mu\subset \Riem(M)$ the  Riemannian metrics on $M$ with volume $\mu$.
There is an evident isomorphism $\Rb_{>0}\times \Riem(M)_\mu\cong\Riem(M)$.
If $g\in \Riem(M)_\mu$, then the  tangent space of  $\Riem(M)_\mu$ at $g$ is  the hyperplane in the space of $g$-selfadjoint endomorphisms $S$ of $TM$ defined by 
\[
\textstyle \la S, \Id\ra_g=\int_T \tr(S)d\!\vol_g=0.
\]

\begin{proposition}[The infinitesimal Hilbert-Schmidt expansion]\label{prop:WP}
The infinitesimal version of the Hilbert-Schmidt expansion on the closed manifold $M$ is given by half  the quadratic form associated with the standard inner product: Let  $g\in \Riem(M)_\mu$ (with $\mu>0$) and let $\tau\in(-\varepsilon, \varepsilon)\mapsto h(\tau)$ be a smooth curve in $\Riem(M)_1$ with 
$h(0)=g$.  If  $f_\tau: (M,g)\to (M, h(\tau))$ is  given by the identity map, then 
\[
\textstyle \HE(f_\tau)^2\equiv \frac{1}{2}\tau^2 \int_M\tr(S^2)d\!\vol_M\equiv \frac{1}{2}\tau^2\la S, S\ra_g \pmod{\tau^3},
\]
where $S$ the self-adjoint endomorphism of the tangent bundle of $M$ defined by $\dot h(0)$.
So $\HE(f_\tau)$ is differentiable in $\tau$ and   $\frac{d}{d\tau}\big|_{\tau=0}\HE(f_\tau)=\frac{1}{\sqrt 2}\la S, S\ra_g$.
\end{proposition}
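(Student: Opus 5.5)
The plan is to reduce $\HE(f_\tau)$ to a symmetric function of the roots of the integrated Hilbert-Schmidt polynomial, and then expand that function to second order in $\tau$. The computation is pointwise linear algebra followed by integration.

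\emph{Step 1: pointwise setup.} Write $h(\tau)=g(P_\tau\,\cdot,\cdot)$ with $P_\tau$ a $g$-selfadjoint positive endomorphism field of $TM$. Then $P_0=\Id$ and $\dot P_0=S$, and we expand $P_\tau=\Id+\tau S+\tau^2R+O(\tau^3)$. For the identity map $f_\tau$, the operator $Df^*Df$ (adjoint taken with respect to $g$ and $h(\tau)$) is exactly $P_\tau$. Hence $\Pc_{Df_\tau}(t)=\det(\Id+tP_\tau)$, and
\[
\textstyle \sum_k \En_k(f_\tau)t^k=\int_M\det(\Id+tP_\tau)\,d\!\vol_g .
\]
We normalize $\mu=1$, so this polynomial has constant term $1$. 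We write it as $\prod_k(1+ty_k)$; then $\HE(f_\tau)^2=\frac14\sum_k(\log y_k)^2$.

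\emph{Step 2: pass from roots to coefficients.} The function $\sum_k(\log y_k)^2$ is a symmetric analytic function of $(y_1,\dots,y_n)$ near $(1,\dots,1)$. It is therefore an analytic function of the coefficients $e_1,\dots,e_n$ near those of $(1+t)^n$. This removes any worry that the roots move only like $\sqrt\tau$ when the coefficients move like $\tau^2$: the quantity is analytic in $\tau$.

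Now put $y_k=1+\varepsilon_k$, so that $\sum(\log y_k)^2=\sum\varepsilon_k^2+O(|\varepsilon|^3)$. Express the two lowest power sums through $e_1,e_2$:
\[
\textstyle \sum\varepsilon_k=e_1-n,\qquad \sum\varepsilon_k^2=p_2-2p_1+n=e_1^2-2e_2-2e_1+n .
\]

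\emph{Step 3: expand $e_1,e_2$.} Pointwise we have $e_1(P_\tau)=n+\tau\tr S+\tau^2\tr R$ and $e_2(P_\tau)=\binom n2+(n-1)(\tau\tr S+\tau^2\tr R)+\tau^2e_2(S)+O(\tau^3)$. We integrate these over $M$.

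The volume-one constraint on $h(\tau)$ gives $\int\tr S\,d\!\vol_g=0$, so the first-order terms of $e_1,e_2$ integrate to zero. Substituting into the formula for $\sum\varepsilon_k^2$, the terms in $a=\int\tr R$ cancel identically. What remains is
\[
\textstyle \sum\varepsilon_k^2=-2\tau^2\int_M e_2(S)\,d\!\vol_g+O(\tau^3)=\tau^2\int_M\bigl(\tr(S^2)-(\tr S)^2\bigr)d\!\vol_g+O(\tau^3).
\]
This is $O(\tau^2)$, so $\varepsilon=O(\tau)$ and the cubic terms are genuinely $O(\tau^3)$.

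\emph{Step 4: the main obstacle, matching the normalization.} The delicate step is reconciling this with the exact statement. The $(\tr S)^2$ term disappears only if $\tr S=0$ pointwise. That holds when the $f_\tau$ are volume-preserving, as in the setting of Definition \ref{def:Hstretch} (i.e.\ $\det P_\tau\equiv1$). Global volume one alone only gives the integrated condition. I would therefore first check whether the intended tangent vectors are pointwise trace-free, or whether the $(\tr S)^2$ term must be kept.

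I would also recheck the numerical constant. With the factor $\frac12$ in \eqref{eqn:Hn} and $y_k$ being squared singular values, my computation gives $\frac14\tau^2\la S,S\ra_g$ rather than $\frac12\tau^2\la S,S\ra_g$. So the convention for $S$ (whether $\dot h$ or its half, i.e.\ variation of $P$ versus of $P^{1/2}$) has to be pinned down; using $P^{1/2}$ gives $\dot P=2S$ and a different factor.

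Once that is settled, differentiability of $\HE(f_\tau)$ at $\tau=0$ follows from the expansion $\HE(f_\tau)^2=c\,\tau^2\la S,S\ra_g+O(\tau^3)$, read with $|\tau|$.
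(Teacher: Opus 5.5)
There is one genuine gap, in Step 3: the inference ``this is $O(\tau^2)$, so $\varepsilon=O(\tau)$'' is invalid, because the roots of $\sum_k\En_k(f_\tau)t^k=\int_M\det(\Id+tH)\,d\!\vol_g$ are in general not real. For instance, take $n=2$ and $h(\tau)=(1+\tau\phi)g$ with $\int_M\phi\,d\!\vol_g=0$, which is a curve in $\Riem(M)_1$. Then the polynomial is $1+2t+(1+c)t^2$ with $c=\tau^2\int_M\phi^2\,d\!\vol_g$, so $\varepsilon=\pm i\sqrt c$. For complex $\varepsilon_k$ the sum $\sum_k\varepsilon_k^2$ bounds nothing: it vanishes on any pair $x\pm ix$. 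Your Step 2 only gives regularity in $\tau$, not the size of the remainder.

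The remainder is also not harmless a priori. The power sum $p_j(\varepsilon)=\sum_k\varepsilon_k^j$ has linear part $(-1)^{j-1}j\,\sigma_j(\varepsilon)$ in the elementary symmetric functions of the $\varepsilon_k$. So it involves $\En_3,\dots,\En_n$, which you never examine, and the a priori information $\En_k-\binom nk=O(\tau^2)$ would let, e.g., $p_3$ contribute at order $\tau^2$.

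The missing idea is to expand both sides in the basis $t^j(1+t)^{n-j}$:
\begin{itemize}
\item pointwise, $\det(\Id+tH_p)=\sum_j e_j(H_p-\Id)\,t^j(1+t)^{n-j}$;
\item for the roots, $\prod_k(1+y_kt)=\sum_j\sigma_j(\varepsilon)\,t^j(1+t)^{n-j}$.
\end{itemize}
Hence $\sigma_j(\varepsilon)=\int_M e_j(H-\Id)\,d\!\vol_g=O(\tau^j)$, and moreover $\sigma_1=O(\tau^2)$ because $\int_M\tr S\,d\!\vol_g=0$. The root bound $|\varepsilon_k|\le 2\max_j|\sigma_j|^{1/j}$ then gives $\varepsilon=O(\tau)$, so the cubic remainder is $O(\tau^3)$. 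Finally $p_2=\sigma_1^2-2\sigma_2=\tau^2\int_M(\tr(S^2)-(\tr S)^2)\,d\!\vol_g+O(\tau^3)$ recovers your formula directly, without the $\tr R$ bookkeeping.

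With that repair, your route is genuinely different from the paper's, and more faithful to the definition of $\HE$. The paper argues pointwise: it expands the eigenvalues $y_k(p)=1+\tau s_k(p)+O(\tau^2)$ of $H(\tau)_p$, gets $\sum_k(\log y_k(p))^2\equiv\tau^2\tr(S_p^2)$, and integrates over $M$. It never forms the roots of the integrated polynomial, which is what $\Hc_n$ is actually applied to. So it cannot see the discrepancy $\frac14\tau^2\int_M(\tr S)^2\,d\!\vol_g$ between $\frac14\int_M\sum_k(\log y_k(p))^2\,d\!\vol_g$ and $\HE(f_\tau)^2$.

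Your Step 4 worries are well founded, and they are defects of the statement, not of your computation.
\begin{itemize}
\item The constant is $\frac14$. Squaring $\Hc_n=\frac12\sqrt{\cdots}$ already gives $\frac14$ in the paper's own pointwise computation. Part (2) of the Main Theorem also forces it: along $H=\mathrm{diag}(e^{2t\lambda_i})$ one has $S=\mathrm{diag}(2\lambda_i)$ and $\HE^2=t^2\sum_i\lambda_i^2=\frac14t^2\tr(S^2)$.
\item The trace term is real. In the example above, $\sum_k(\log y_k)^2=-2c+O(c^2)<0$, so $\HE(f_\tau)$ is not even defined, although the statement predicts $\HE(f_\tau)^2\approx c$.
\end{itemize}
Thus the true statement is for pointwise trace-free $S$ (the volume-form-preserving setting of the introduction), with $\frac14$ in place of $\frac12$. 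In that case your leading term coincides with the paper's pointwise integral, and your argument supplies the justification the paper's proof omits.

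Finally, $\HE(f_\tau)=\frac12|\tau|\la S,S\ra_g^{1/2}+O(\tau^2)$ has one-sided derivatives $\pm\frac12\la S,S\ra_g^{1/2}$ at $\tau=0$. It is not differentiable there unless $S=0$, so the last sentence of the statement can only be read as you suggest.
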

\begin{proof}
Let $H(\tau)$ be the selfadjoint endomorphism of the tangent bundle characterized by $h(\tau)(v,v')=g(H(\tau)(v), v')$.
So $H(\tau)\equiv\Id+\tau S\pmod{\tau^3}$.
Then the adjoint  $Df_\tau^*$ of $Df_\tau$ is equal to $H(\tau)$. 
If  $\{s_k(p)\}_{k=1}^n$ are the eigenvalues of $S_p$, then the  eigenvalues of $H(\tau)$ are $y_k(p)\equiv 1+\tau s_k(p)\pmod{\tau^2}$, $k=1, \dots, n$ and hence 
\[
\textstyle \sum_k (\log y_k(p))^2\equiv \tau^2 \sum_k s_k(p)^2\equiv\tau^2 \tr(S_p^2)\pmod{\tau^3}.
\] 
Since $\tr(S^2)$ is globally defined, the definition of $\HE$  shows  that 
\[
\textstyle \HE(f_\tau)^2\equiv\frac{1}{2}\tau^2 \int_M \tr(S^2)d\!\vol_g\equiv \frac{1}{2}\tau^2\la S, S\ra_g\pmod{\tau^3},
\] 
as asserted.
\end{proof}

One usually considers not $\Riem (M)_\mu$, but rather its orbit space with respect to the action of 
$\Diff(M)$ or of its identity component $\Diff(M)^\circ$.   A theorem of Ebin \cite{Ebin68} shows that the slice theorem 
for proper Lie group  actions on a finite-dimensional smooth manifold  generalizes to this setting so that in case 
$\Diff(M)^\circ$ acts freely, such slices can function as charts of the orbit space, 
giving it the structure of a \emph{ILH-manifold} (i.e., modeled on an inverse limit of Hilbert spaces) endowed with a  Riemannian metric in that category. The tangent space at the image of $g$ in this orbit space is the normal space to the $\Diff(M)^\circ$-orbit of $g$ and  consists of the  $S$ that are obtained as  the $g$-selfadjoint part of a Lie derivative  of a vector field on $X$.  This is still an infinite-dimensional space when $\dim M\ge 2$. The $\HE$-potential descends to this orbit space and Proposition \ref{prop:WP}  tells us that it induces its Riemannian metric.
% This however need not be the associated path metric.

However, by  restriction to a 
suitable subspace of Riemannian metrics, we may end up in some cases with a finite-dimensional manifold, which we then 
may regard as a Teichm\"uller space for $M$. Here are two basic examples. The first shows that $\HE$ recovers the Weil-Petersson Riemannian metric on the classical Teichm\"uller spaces (for  closed genus $\ge 2$ surfaces). 

\begin{example}\label{example:hyperbolic}
Let $M$ be a closed surface of  genus $g\ge 2$. We here  restrict to hyperbolic  Riemannian metrics of unit volume (so by Gau\ss-Bonnet these have  constant negative Gau\ss\ curvature $(2-2g)^{-1}$) and  so lie in $\Riem (M)_{1}$). The  $\Diff(M)^\circ$-orbit space  of this locus yields its Teichm\"uller space $\Teich(M)$ 
endowed with the Weil-Petersson metric. This is due to Fischer-Tromba \cite{FT84} (see also \cite[\S 3.2]{Yam14}). Note that we are not claiming that one can recover the induced 
Weil-Petersson path metric on $\Teich(M)$, but only the path metric on $\Teich(M)$ induced from that on the orbit space of $\Riem (M)_{1}$. The point is that $\Teich(M)$ is not isometrically embedded in this orbit space.
\end{example}

Other Riemannian manifolds are known to have a (locally) symmetric space as a moduli space. Among them are the Ricci-flat
Riemannian manifolds which come from a hyperk\"ahler structure: the symmetric space is then that of  a Lie group isomorphic with $\SO (3, r)$ for some $r$. If $G$ is a connected real semisimple group, then we can always embed it some $\SL(n,\Rb)$ such that
its intesection with $\SO(n)$ is a maximal compact subgroup. This embeds the symmetric space $X_G$  of $G$ in $X_n$ and thus one could excpect that if an an open subset of  $X_G$ parametrizes certain   Riemannian manifolds, then these manifolds will be directly related to the  flat tori we get via their embedding. The hope is then  that the metric on $X_G$ also arises as the solution of an extremal problem. We illustrate this with the Kummer construction.

\begin{definition}
Let  $\Tc$ be a flat torus. A \emph{Kummer involution} of $\Tc$ is an  isometric involution $\iota\in \aut(\Tc)$ which induces minus the identity in $H_1(\Tc)$. We call  the Riemannian orbifold defined by such an involution  (and any Riemannian orbifold  isomorphic to it)  a \emph{Kummer orbifold}.
\end{definition}

Observe that if  $n=\dim \Tc$, then a Kummer involution $\iota$ of $\Tc$ has $2^n$ fixed points and any two distinct fixed points differ by a translation  of order $2$: the fixed point set of $\iota$ is a $H_1(\Tc; \Fb_2)$-torsor and maps bijectively to the set of orbifold points of the associated  Kummer orbifold. Two  Kummer involutions of $\Tc$  are conjugate under a  translation.
Note that $\TE(\iota)=\HE(\iota)=0$. Indeed, the  terms of the singular sequence for minus the identity of an inner product space $E$ are all equal $1$ and hence so are all its dilatations and energies.

If $\Kc_0$ and $\Kc_1$ are Kummer orbifolds defined by $(\Tc_0, \iota_0)$  and  $(\Tc_1, \iota_1)$, then a  \empty{continuous map} $f:\Kc_0\to \Kc_1$ is by definition a map $\Kc_0\to \Kc_1$ which lifts to an equivariant  continuous map $\Tc_0\to \Tc_1$. So such a map must take orbifold points to orbifold points. This extends in an evident manner notions of a smooth map, a Lipschitz map, an affine map  and a homotopy class of maps between $\Kc_0$ and $\Kc_1$. The affine maps in a homotopy class differ by an order $2$ translation in $\Kc_1$ and so there are only finitely many such.

We can also interpret $X_n$ as the moduli space of marked  Kummer orbifolds of dimension $n$ of volume  $1$.

A  Lipschitz map $f:\Kc_0\to\Kc_1$ lifts by definition to an equivariant 
 Lipschitz map  $\tilde f: (\Tc_0, \iota_0)\to (\Tc_1, \iota_1)$ and this lift is unique up to postcomposition with $\iota_1$. These two  lifts  define the same singular sequence at every point $p\in \Tc_0$ where $f$ is smooth and so  have there the same dilatations and energies at $p$. It is clear that  $\Dil_k(f)=\Dil_k(\tilde f)$ and $\Ec_k(f)=\Ec_k(\tilde f)$ and hence $\TE(f)=\TE(\tilde f)$  and $\HE(f)=\HE(\tilde f)$.

Theorem \ref{theorem:newmain} has an obvious extension to the Kummer setting:

\begin{theorem}[{\bf Kummer version of the Main Theorem}] \label{theorem:kummermain}
Let $\Kc_0$ and $\Kc_1$ be flat Kummer orbifolds, both of dimension $n\ge 1$ and  of unit volume and let $\Fc$ be a homotopy class of homotopy equivalences $\Kc_0\xrightarrow{\sim} \Kc_1$.  Then  $\TE$ and $\HE$ have a minimum on the set of \emph{volume-preserving} Lipschitz  maps in $\Fc$ and takes that  value only  on the $2^n$ many affine maps  in $\Fc$.
The symmetric space distance is also the  Teichm\"uller distance for flat Kummer orbifolds of dimension $n$ defined by $\TE$ and $\HE$. 
\end{theorem}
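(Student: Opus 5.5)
The plan is to reduce everything to Theorem \ref{theorem:newmain} by passing to equivariant lifts. Write $\Kc_i=\Tc_i/\iota_i$. Since $\Kc_0$ and $\Kc_1$ have unit volume, $\vol(\Tc_0)=\vol(\Tc_1)=2$, so the volume ratio is $d=1$. A homotopy equivalence $f\in\Fc$ lifts, by definition, to an $(\iota_0,\iota_1)$-equivariant Lipschitz map $\tilde f:\Tc_0\to\Tc_1$. This lift is unique up to postcomposition with $\iota_1$. It is volume-preserving precisely when $f$ is, because $|Jac|$ is computed locally and the quotient maps are local isometries off a finite set.

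\textbf{Step 1: degree of the lift.} I will show that the lifts of maps in $\Fc$ form a homotopy class of absolute degree $1$. The homotopy class of $\tilde f$ is determined by $\varphi=\tilde f_*:H_1(\Tc_0)\to H_1(\Tc_1)$, and equivariance forces $\varphi\circ(-1)=(-1)\circ\varphi$, which holds automatically. To see that $\varphi$ is an isomorphism, I would use the orbifold fundamental group. The orbifold fundamental group of $\Kc_i$ is an extension of $\Zb/2$ by $H_1(\Tc_i)$. A homotopy equivalence of orbifolds induces an isomorphism on this group, and this isomorphism preserves the translation subgroup, since that subgroup is the unique maximal free abelian subgroup of finite index. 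Hence $\varphi$ is an isomorphism and $\tilde f$ has absolute degree $1=d$. I expect this step to be the main obstacle, because the paper's notion of ``homotopy class'' for Kummer orbifolds is defined only via lifts. If that route proves awkward, I would instead define $\Fc$ directly as a class of lifts with $\varphi$ invertible, which is what ``homotopy equivalence'' should mean here.

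\textbf{Step 2: applying the Main Theorem.} Since $\TE(f)=\TE(\tilde f)$ and $\HE(f)=\HE(\tilde f)$, as noted before the statement, Theorem \ref{theorem:newmain}(1) shows that $\TE(f)\ge\TE(\Phi)$ and $\HE(f)\ge\HE(\Phi)$, where $\Phi$ is affine in the class of $\tilde f$. Equality holds only when $\tilde f$ is affine, that is, $\tilde f=\tau\circ\Phi$ for a translation $\tau$.

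\textbf{Step 3: counting the minimizers.} It remains to see which affine maps are equivariant, so that the minimum is attained in $\Fc$ and only on the claimed maps. Choose base points so that $\iota_i(x)=-x$ in the group structures and $\Phi$ is a homomorphism. Then $\tau\circ\Phi$ is equivariant if and only if $\tau=-\tau$, i.e.\ $\tau$ is one of the $2^n$ points of order $2$ in $\Tc_1$. Each of these maps descends to $\Kc_1$. Distinct such $\tau$ give maps $\Kc_0\to\Kc_1$ that are distinct, since $\iota_1\circ\tau\circ\Phi=\tau\circ\Phi$ only up to the sign of $\Phi$. Each is affine and lies in $\Fc$, as all of them induce $\varphi$ on the lift. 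This gives exactly $2^n$ minimizers.

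\textbf{Step 4: the distance.} Finally, $X_n$ is identified with marked unit-volume Kummer orbifolds by sending $(\Tc,\Fc)$ to the quotient by $-1$. The rescaling from volume $2$ to volume $1$ leaves the singular values of $\varphi$ unchanged, since both tori scale by the same factor. So by Theorem \ref{theorem:newmain}(2) and Remarks \ref{rems:main}(1), the common minimum equals $d_{X_n}$.
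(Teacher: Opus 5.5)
Your reduction is the paper's own. You pass to $(\iota_0,\iota_1)$-equivariant lifts between the volume-$2$ tori, use $\TE(f)=\TE(\tilde f)$ and $\HE(f)=\HE(\tilde f)$, apply Theorem~\ref{theorem:newmain} with $d=1$, and identify the equivariant affine maps as the translates $\tau_c\circ\Phi$ with $2c=0$. Steps 2 and 4 are fine.

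The step that does not hold as written is the claim in Step 3 that all $2^n$ of these maps lie in $\Fc$ ``as all of them induce $\varphi$ on the lift''. That is true only if $\Fc$ is the coarse class of all orbifold maps whose lifts lie in $\Fc_\varphi\cup\Fc_{-\varphi}$. It is false for orbifold homotopy classes, which is the notion your Step 1 relies on when you let a homotopy equivalence act on $\pi_1^{\mathrm{orb}}$. Along an orbifold homotopy $H_t$, each orbifold point of $\Kc_0$ is sent into the finite set of orbifold points of $\Kc_1$, so its image is constant in $t$. But the descent of $\tau_c\circ\Phi$ sends $[0]$ to $[c]$, and distinct $c$ with $2c=0$ give distinct orbifold points. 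In the language of Step 1: if $\tilde c\in\tfrac12 H_1(\Tc_1)$ lifts $c$, then $\tau_c\circ\Phi$ induces the homomorphism $H_1(\Tc_0)\rtimes\Zb/2\to H_1(\Tc_1)\rtimes\Zb/2$ sending $(0,\iota_0)$ to $(2\tilde c,\iota_1)$. The class of $2\tilde c$ modulo $2H_1(\Tc_1)$ is invariant under conjugation. So for orbifold homotopy the $2^n$ affine maps fall into $2^n$ different classes, exactly one in each.

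You therefore have to commit to one notion.
\begin{itemize}
\item If $\Fc$ is the coarse class, your fallback definition is the right one. This is what the count $2^n$ in the statement presupposes, as does the paper's remark that the affine maps in a class differ by order-two translations. Step 1 then needs no $\pi_1^{\mathrm{orb}}$: if $g$ is a homotopy inverse whose lift induces $\psi$, then $\psi\varphi=\pm1$ and $\varphi\psi=\pm1$. So $\varphi$ is invertible over $\Zb$ and the lift has absolute degree $1$.
\item If $\Fc$ is an orbifold homotopy class, Step 1 stands, but Step 3 must conclude that the minimum is attained exactly at the unique affine map in $\Fc$.
\end{itemize}
In either case the minimum value, and hence Step 4, is unaffected.

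Separately, the distinctness argument in Step 3 is garbled. State it as follows: $\iota_1\circ\tau_c\circ\Phi$ induces $-\varphi\neq\varphi$ on $H_1$, so it is not of the form $\tau_{c'}\circ\Phi$. Alternatively, just compare the images of $[0]$.
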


We put this into context for  the classical Kummer dimension $4$. What makes this dimension  special is that a Kummer orbifold of dimension 4 can be regarded as a mildly degenerate K3-manifold (a K3 orbifold) endowed with a Ricci-flat metric: it fits in a $57$-dimensional family of 4-dimensional, closed oriented orbifolds endowed with a Ricci-flat metric of a fixed volume, whose general member is a K3 manifold  without orbifold points. The second homology group of a K3 manifold (endowed with its intersection pairing)  is an even unimodular lattice  $\Lambda$   of signature $(3,19)$  and the base of this family is 
$\Gras_3^+(\Lambda_\Rb)$.  Our  main theorem is compatible with an embedding of summetric spaces $X_4\hookrightarrow \Gras_3^+(\Lambda_\Rb)$.
To see this, consider  $\Rb^4$ as an oriented real vector  space, with the orientation given as an isomorphism $\det: \wedge^4\Rb^4\to \Rb$. This defines on 
$\wedge^2\Rb^4$  a quadratic form 
\[
\delta: \alpha\in\wedge^2\Rb^4\mapsto \det(\alpha\wedge\alpha)\in \Rb
\]  
which  is nondegenerate of signature $(3,3)$. 
Any positive definite quadratic form $g$   on $\Rb^4$ defines a star operator $*_g$ on $\wedge^2\Rb^4$.
This is an involution whose eigenspaces $\wedge_{g,\pm}^2\Rb^4\subset \wedge^2\Rb^4$ are perpendicular for $\delta$ with 
$\wedge_{g,+}^2\Rb^4\subset \wedge^2\Rb^4$ positive definite  and $\wedge_{g,-}^2\Rb^4\subset \wedge^2\Rb^4$ negative definite of dimension $3$.
The assignment $g\mapsto \wedge_{g,+}^2\Rb^4$ maps the space $X_4$ of inner products on $\Rb^4$ of determinant $1$ isomorphically onto 
the open subset  $\Gras_3^+(\wedge^2\Rb^4)\subset \Gras_3(\wedge^2\Rb^4)$ of positive definite $3$-planes in $V$. This identifies the symmetric space of $\SL(\Rb^4)$ with the 
symmetric space of $\SO (\wedge^2\Rb^4)\cong \SO(3,3)$. Indeed, the map $A\in \SL(\Rb^4)\mapsto \wedge^2A\in \SO (3,3)$ is an isogeny.

The classical Kummer construction embeds $\wedge^2\Zb^4$ primitively in $\Lambda$ of signature $(3,19)$. The form on $\Lambda$  restricts to $2\delta$ and  the orthogonal complement is spanned by 16 mutually perpendicular vectors having all self product $-2$ 
(these 16  vectors are naturally indexed by $\Fb_2^4$). This yields a totally geodesic  embedding of symmetric spaces 
\[
X_4\cong \Gras^+_3(\wedge^2\Rb^4)\hookrightarrow \Gras^+_3(\Lambda_\Rb).
\]
The right hand side is the Teichm\"uller space for volume one Ricci-flat K3 orbifolds.

We believe that the following is a fundamental problem about the Teichm\"uller space of Ricci-flat metrics on K3 surfaces.

\begin{problem}[{\bf Teichm\"uller problem for Ricci flat manifolds}]
Given two unit volume, Ricci-flat metrics on the K3 manifold $M$ in the same connected component  
in the space of such metrics, find the extremal map  (for e.g.\ the Teichm\"uller potential $\HE$) that realizes their symmetric space distance.
\end{problem}

As a second problem we pose the following.
\begin{problem}
For which closed manifolds $M$, does taking an infimum of the Hilbert-Schmidt expansion 
$\HE$ of maps in an appropriate homotopy class determine a path metric on the space $\Riem (M)_{1}/\Diff(M)^0$?
\end{problem}


\begin{thebibliography}{99} 


\bibitem[BPT05]{BPT05}
A.~Belkhirat, A.~Papadopoulos, M.~Troyanov,
\textit{Thurston's weak metric on the Teichm\" uller space of the torus},
Trans.\ Amer.\ Math.\ Soc.\ \textbf{357} (2005), 3311--3324. \url{https://arxiv.org/pDf/1508.04039}

\bibitem[BNST17]{BNST17}
L.~Borisov, P.~Neff, S.~Sra, C.~Thiel,
\textit{The sum of squared logarithms inequality in arbitrary dimensions,}
Linear Algebra Appl.\  \textbf{528} (2017), 124--146.

\bibitem[Ebin68]{Ebin68}
D.~Ebin,
\textit{The manifold of Riemannian metrics,} in \textsl{Global Analysis, Proc. Sympos.\ Pure Math.}\ \textbf{15}, Amer. Math. Soc., Providence, RI, 1970, 11--40.

%\bibitem[EE69]{EE69}
%C.~Earle, J.~Eells, 
%\textit{A fibre bundle description of Teichm\"uller theory,} J.~Differential Geometry \textbf{3} (1969), 19--43.

\bibitem[Fed69]{Fed69}
H. Federer, Geometric Measure Theory, Springer, 1969. 

\bibitem[FT84]{FT84}
A.~Fischer, A.~Tromba:
\textit{On a purely ``Riemannian'' proof of the structure and dimension of the unramified moduli space of a compact Riemann surface,} Math.\ Ann.  \textbf{267} (1984), 311--345.

\bibitem[GJ21]{GreenfieldJi21}
M.~Greenfield and L.~Ji,
\emph{Metrics and compactifications of Teichmuller spaces of flat tori},
Asian J. Math. \textbf{25} (2021), 477--504. \url{https://arxiv.org/pdf/1903.10655}

\bibitem[Gr82]{Gr82}
M. Gromov, 
\textit{Volume and bounded cohomology,}
Publ.\ Math.\ IHES \textbf{56} (1982), 5--99.

\bibitem[Gu13]{Gu13}
L.~Guth, 
\textit{Contraction of areas vs. topology of mappings,} GAFA, Vol. 23, pp. 1804--1902 (2013).

%\bibitem[Law75]{Law75}
%H.~Blaine Lawson, Jr.,
%\textit{The stable homology of a flat torus,}
%Math.\ Scand.  \textbf{36} (1975), 49--73.

\bibitem[Liu12]{Liu12}
Luofei Liu, 
\textit{Dilatation of homotopy classes and norms of cohomology classes, }
Proc.\ Lond.\ Math.\ Soc.\  \textbf{105} (2012) 447--472.

\bibitem[LV77]{LV77}
J. Luukkainen and J. V\"ais\"al\"a, 
\textit{Elements of Lipschitz topology,}  Ann.\ Acad.\ Sci.\ Fen., 
Ser. A, (1977), 85--122.

\bibitem[MOA11]{MOA11}
A.~W.~Marshall, I.~Olkin, B.~C.~Arnold:
\textsl{Inequalities: Theory of Majorization and Its Applications,} 2nd ed., Springer, 2011.

\bibitem[Mirsky75]{Mirsky75}
L.~Mirsky:
\textit{A trace inequality of John von Neumann,}
Monatsh.\ Math.\ \textbf{79} (1975), 303--306.

\bibitem[Teich39]{Teich39}
O.~Teichm\"uller, \textsl{Extremal quasiconformal mappings and
quadratic differentials,} English trans. by Guillaume Th\'eret, in Handbook on Teichm\"uller Theory, Vol.~V, edited by A. Papadopoulos, IRMA Lect.\ Math.\ Theor.\ Phys.\  \textbf{26}, 321--483. EMS Z\"urich, 2016.

\bibitem[Yam14]{Yam14}
S.~Yamada,
\textit{Local and global aspects of Weil-Petersson geometry,} in \textsl{Handbook of
Teichm\"uller theory,} Vol.~IV, 43--111, edited by A. Papadopoulos, IRMA Lect.\ Math.\ Theor.\ Phys.\ \textbf{19}. EMS Z\"urich, 2014.


\end{thebibliography}
\end{document}